\numberwithin{equation}{section}
\newtheorem{thm}{Theorem}[section]
\newtheorem{prop}[thm]{Proposition}
\newtheorem{lem}[thm]{Lemma}
\newtheorem{cor}[thm]{Corollary}
\theoremstyle{definition}
\newtheorem{dfn}[thm]{Definition}
\newtheorem{rmk}[thm]{Remark}
\newcommand{\Zz}{\mathbb{Z}}
\newcommand{\Cz}{\mathbb{C}}
\newcommand{\hyper}[4]{\left(\begin{matrix} #1 \\ #2 \end{matrix}; #3, #4\right)}
\newcommand{\sn}{\mathrm{sn}\,}
\newcommand{\cn}{\mathrm{cn}\,}
\newcommand{\dn}{\mathrm{dn}\,}
\begin{document}
\allowdisplaybreaks

\newcommand{\arXivNumber}{2206.15137}

\renewcommand{\PaperNumber}{014}

\FirstPageHeading

\ShortArticleName{A Generalization of Zwegers' $\mu$-Function}

\ArticleName{A Generalization of Zwegers' $\boldsymbol{\mu}$-Function According\\ to the $\boldsymbol{q}$-Hermite--Weber Difference Equation}

\Author{Genki SHIBUKAWA and Satoshi TSUCHIMI}

\AuthorNameForHeading{G.~Shibukawa and S.~Tsuchimi}

\Address{Department of Mathematics, Kobe University, Rokko, 657--8501, Japan}
\Email{\href{mailto:g-shibukawa@math.kobe-u.ac.jp}{g-shibukawa@math.kobe-u.ac.jp}, \href{mailto:183s014s@stu.kobe-u.ac.jp}{183s014s@stu.kobe-u.ac.jp}}

\ArticleDates{Received July 02, 2022, in final form February 25, 2023; Published online March 23, 2023}

\Abstract{We introduce a one parameter deformation of the Zwegers' $\mu$-function as the image of $q$-Borel and $q$-Laplace transformations of a fundamental solution for the $q$-Hermite--Weber equation. We further give some formulas for our generalized $\mu$-function, for example, forward and backward shift, translation, symmetry, a difference equation for the new parameter, and bilateral $q$-hypergeometric expressions. From one point of view, the continuous $q$-Hermite polynomials are some special cases of our $\mu$-function, and the Zwegers' $\mu$-function is regarded as a continuous $q$-Hermite polynomial of ``$-1$ degree''.}

\Keywords{Appell--Lerch series; $q$-Boerl transformation; $q$-Laplace transformation; $q$-hyper\-geometric series; continuous $q$-Hermite polynomial; mock theta functions}

\Classification{33D15; 39A13; 30D05; 11F50; 33D70}
\section{Introduction}
Throughout this paper, let $\Zz$ denote the set of integers and let $\Cz$ denote the set of complex numbers, ${\rm i}:=\sqrt{-1}$ be the imaginary unit, $\tau\in\Cz$ be a complex number $\mathrm{Im}(\tau)>0$, and $q:={\rm e}^{2\pi{\rm i}\tau}$.
We define the $q$-shifted factorials and Jacobi theta functions as follows:
\begin{gather*}
(x)_{\infty }
 =
 (x;q)_{\infty }
 :=
 \prod_{j=0}^{\infty }\big(1-xq^{j}\big), \qquad
 (x)_{n }=(x;q)_{n}:=\frac{(x;q)_{\infty }}{(q^{n}x;q)_{\infty }}, \qquad n \in \mathbb{Z}, \\
\vartheta_{11}(u,\tau)
 =
 \vartheta_{11}(u)
 :=
 \sum_{n\in\Zz}{\rm e}^{2\pi{\rm i}(n+\frac{1}{2})(u+\frac{1}{2})+\pi{\rm i}(n+\frac{1}{2})^2\tau}
 =-{\rm i}q^{\frac{1}{8}}{\rm e}^{-\pi{\rm i}u}\big(q,{\rm e}^{2\pi{\rm i}u}, q{\rm e}^{-2\pi{\rm i}u};q\big)_{\infty }, \\
\theta_q(x)
 :=(q,-x,-q/x)_\infty
 =\sum_{n \in \mathbb{Z}} x^nq^\frac{n(n-1)}{2},
\end{gather*}
and for appropriate complex numbers $a_1,\dots, a_r,b_1,\dots,b_s,x$, we define the $q$-hypergeometric series as follows:
\begin{gather*}
{}_r\phi_{s}\left( \begin{matrix} a_1,\dots,a_r \\ b_1,\dots,b_s \end{matrix};q,x\right)
 :=
 \sum_{n=0}^{\infty }\frac{(a_1,\dots, a_r)_n}{(b_1,\dots,b_s,q)_n}\left((-1)^{n }q^\frac{n(n-1)}{2}\right)^{s-r+1}x^{n }, \\
{}_r\psi_{s}\left( \begin{matrix} a_1,\dots, a_r\\b_1,\dots, b_s\end{matrix};q,x\right)
 :=
 \sum_{n \in \mathbb{Z}}\frac{(a_1,\dots, a_r)_n}{(b_1,\dots, b_s)_n}\left((-1)^{n }q^\frac{n(n-1)}{2}\right)^{s-r}x^{n },
\end{gather*}
where
\begin{gather*}
(a_1,\dots,a_r)_{n }
 =
 (a_1,\dots,a_r;q)_{n }
 :=(a_1;q)_{n}\cdots (a_r;q)_{n}, \qquad n \in \mathbb{Z}\cup \{\infty \}.
\end{gather*}

Mock theta functions first appeared in Ramanujan's last letter to Hardy in 1920.
In this letter, Ramanujan told Hardy that he had discovered a new class of functions which he called mock theta functions.
Mock theta functions are functions that have asymptotic behavior similar to ``theta functions'' (i.e., modular forms) at roots of unity but are not ``theta functions'', and Ramanujan gave 17 examples of mock theta functions.
Some typical examples are as follows:
\begin{gather*}
f_0(q)=\sum_{n=0}^{\infty } \frac{q^{n^2}}{(-q;q)_n}, \qquad
\phi(q)=\sum_{n=0}^{\infty } \frac{q^{n^2}}{\big({-}q^2;q^2\big)_n}, \qquad
\psi(q)=\sum_{n=1}^{\infty } \frac{q^{n^2}}{\big(q;q^2\big)_n}.
\end{gather*}
Later, Andrews and Hickerson gave a detailed definition of the mock theta function~\cite{AH}. For more background on mock theta functions, see, for example,~\cite{AB,GM}.

Ramanujan's mock theta functions are represented as linear combinations of specializations of the universal mock theta function:
\[
g_3(x;q):=\sum_{n=1}^{\infty }\frac{q^{n(n-1)}}{(x)_{n }\big(x^{-1}q\big)_{n }}
\]
and some $q$-infinite products, which have come to be known as the mock theta conjectures.
For example \cite[Appendix~A]{BFOR},
\begin{gather*}
f_0(q)
 =
 -2q^{2}g_{3}\big(q^{2};q^{10}\big)+\frac{\big(q^{5};q^{5}\big)_{\infty }\big(q^{5};q^{10}\big)_{\infty }}{\big(q;q^{5}\big)_{\infty }\big(q^{4};q^{5}\big)_{\infty }}.
\end{gather*}
Such identities were proved by Hickerson \cite{H}.
More fundamentally, the universal mock theta function $g_{3}(x;q)$ can be written in the form~\cite[Theorem~3.1]{K}
\begin{gather*}
q^{-\frac{1}{24}}x^\frac{3}{2}g_3(x;q)
 =
 \frac{q^{\frac{1}{3}}\big(q^{3};q^{3}\big)_{\infty }^{3}}{(q;q)_{\infty }\vartheta_{11}(3u,3\tau)} +q^{-\frac{1}{6}}x\mu(3u,\tau,3\tau)+q^{-\frac{2}{3}}x^2\mu(3u,2\tau,3\tau),
\end{gather*}
where $\mu$ is the $\mu$-function defined by Zwegers as follows \cite{Zw1}:
\begin{gather*}
\mu(u,v;\tau)
 :=\frac{{\rm e}^{\pi{\rm i} u}}{\vartheta_{11}(v)}\sum_{n\in\Zz}\frac{(-1)^{n}{\rm e}^{2\pi{\rm i}nv}q^\frac{n(n+1)}{2}}{1-{\rm e}^{2\pi{\rm i}u}q^n}.
\end{gather*}
For convenience, we also use the following multiplicative notation of the $\mu$-function:
\begin{gather*}
\mu(x,y;q)=\mu(x,y):=\frac{{\rm i}q^{-\frac{1}{8}}\sqrt{xy}}{\theta_q(-y)}\sum_{n\in\Zz}\frac{(-1)^ny^nq^\frac{n(n+1)}{2}}{1-xq^n}.
\end{gather*}
If we substitute $x={\rm e}^{2\pi{\rm i}u}$ and $y={\rm e}^{2\pi{\rm i}v}$, then $\mu(x,y;q )=\mu(u,v;\tau )$.

Zwegers showed that the $\mu$-function satisfies a transformation law like Jacobi forms by adding an appropriate non-holomorphic function to the $\mu$-function \cite{Zw1}:
\begin{gather*}
\tilde{\mu}(u,v;\tau+1)
 =
 {\rm e}^{-\frac{\pi{\rm i}}{4}}\tilde{\mu}(u,v;\tau), \\
\tilde{\mu}\left(\frac{u}{\tau},\frac{v}{\tau};-\frac{1}{\tau}\right)
 =
 -{\rm i}\sqrt{-{\rm i}\tau}{\rm e}^{\pi{\rm i}\frac{(u-v)^2}{\tau}}\tilde{\mu}(u,v;\tau),
\end{gather*}
where
\begin{gather*}
\tilde{\mu}(u,v;\tau)
 :=
 \mu(u,v;\tau)+\frac{{\rm i}}{2}R(u-v;\tau), \qquad
E(x)
 :=
 2\int_0^x {\rm e}^{-\pi z^2}{\rm d}z, \nonumber \\
R(u;\tau)
 :=
 \sum_{\nu\in\Zz+\frac{1}{2}}\big\{{\rm sgn}(\nu)-E\big((\nu+a)\sqrt{2t}\big)\big\}(-1)^{\nu-\frac{1}{2}}{\rm e}^{-\pi{\rm i}\nu^2\tau-2\pi{\rm i}\nu u}, \nonumber
\end{gather*}
and $t={\rm Im}(\tau)$, $a=\frac{{\rm Im}(u)}{{\rm Im}(\tau)}$.
This result was a pioneering work in the study of mock modular forms (see~\cite{BFOR}).
Thus, the $\mu$-function is very important for the study of mock theta functions.

Further, Zwegers gave the following formulas:
\begin{gather}
\label{eq:mu periodicity}
\mu(u+1,v)
 =
 \mu(u,v+1)=-\mu(u,v), \\
\label{eq:mu pseudo periodicity}
\mu(u+\tau,v)
 =
 -{\rm e}^{2\pi{\rm i}(u-v)}q^\frac{1}{2}\mu(u,v)-{\rm i}{\rm e}^{\pi{\rm i}(u-v)}q^\frac{3}{8}, \\
\label{eq:mu translation}
\mu(u+z,v+z)
 =
 \mu(u,v)+\frac{{\rm i}q^{\frac{1}{8}}(q)_{\infty }^{3}\vartheta_{11}(z)\vartheta_{11}(u+v+z)}{\vartheta_{11}(u)\vartheta_{11}(v)\vartheta_{11}(u+z)\vartheta_{11}(v+z)}, \\
\label{eq:mu symmetry}
\mu(u,v)
 =
 \mu(u+\tau,v+\tau) \\
\label{eq:mu symmetry2}
 \hphantom{\mu(u,v)}{} =
 \mu(v,u) \\
\label{eq:mu symmetry3}
 \hphantom{\mu(u,v)}{} =
 \mu(-u,-v).
\end{gather}

On the other hand, the $\mu$-function is also a very interesting object from the viewpoint of $q$-hypergeometric functions.
For example, let us recall the well-known Kronecker formula \cite{W}:
\begin{gather}
\label{eq:Kronecker formula}
k(x,y)
 :=
 \frac{1}{1-x}\, {_{1}\psi _1}\left(\begin{matrix} x \\ qx \end{matrix};q,y\right)
 =
 \frac{(q,q,xy,q/xy)_{\infty}}{(x,q/x,y,q/y)_{\infty}}
 =
 \frac{(q)_{\infty }^{3}\theta _{q}(-xy)}{\theta _{q}(-x)\theta _{q}(-y)}.
\end{gather}
The second equality is the case of $a=x$, $b=xq$, $z=y$ in Ramanujan's summation formula:
\begin{gather*}
{_{1}\psi _1}\left(\begin{matrix} a \\ b \end{matrix};q,z\right)
 =
 \sum_{n \in \mathbb{Z}}
 \frac{(a)_{n}}{(b)_{n}}z^{n}
 =
 \frac{(az,q/az,q,b/a)_{\infty}}{(z,b/az,b,q/a)_{\infty}}.
\end{gather*}
Note that from the most right-hand side of (\ref{eq:Kronecker formula}), one recognizes the symmetry $k(x,y)=k(y,x)$ explicitly.

Obviously, the $\mu$-function is an analogue of the Kronecker summation:
\[
k(x,y)
 =
 \sum_{n\in\Zz}\frac{y^{n}}{1-xq^{n}}.
\]
Hence the symmetric property $\mu(x,y)=\mu(y,x)$ holds in a similar way as $k(x,y)=k(y,x)$, that is $\mu$-function has the following expressions:
\begin{align}
\label{eq:sym mu func 1}
\mu(x,y)
 &=
 \frac{{\rm i}q^{-\frac{1}{8}}\sqrt{xy}}{\theta_q(-x)}\frac{1}{1-y}
 \,{}_1\psi_2\left(\begin{matrix} y\\0,qy\end{matrix};q,qx\right) \\
\label{eq:sym mu func 2}
 &=
 \frac{{\rm i}q^{-\frac{1}{8}}\sqrt{xy}}{(x,y)_{\infty}}
 \, {}_2\psi_2\left(\begin{matrix}x,y\\0,0\end{matrix};q,q\right) \\
\label{eq:sym mu func 3}
 &=
 \frac{{\rm i}q^{-\frac{1}{8}}\sqrt{xy}}{(q/x,q/y)_{\infty}}\frac{1}{(1-x)(1-y)}
 \, {}_0\psi_2\left(\begin{matrix}-\\qx,qy\end{matrix};q,qxy\right).
\end{align}
These expressions follow from some Bailey transformations for ${_{2}\psi _2}$ (see \cite[p.~150, Exercise~5.20]{GR}):
\begin{align}
{_{2}\psi _2}\left(\begin{matrix}a,b \\ c,d \end{matrix};q,z\right)
\label{eq:Bailey trans0}
 &=
 \frac{(az,c/a,d/b,qc/abz)_{\infty}}{(z,c,q/b,cd/abz)_{\infty}}
 {_{2}\psi _2}\left(\begin{matrix}a,abz/c \\ az,d \end{matrix};q,\frac{c}{a}\right) \\
\label{eq:Bailey trans1}
 &=
 \frac{(bz,d/b,c/a,qd/abz)_{\infty}}{(z,d,q/a,cd/abz)_{\infty}}
 {_{2}\psi _2}\left(\begin{matrix}b,abz/d \\ bz,c \end{matrix};q,\frac{d}{b}\right) \\
\label{eq:Bailey trans2}
 &=
 \frac{(az,d/a,c/b,qd/abz)_{\infty}}{(z,d,q/b,cd/abz)_{\infty}}
 {_{2}\psi _2}\left(\begin{matrix}a,abz/d \\ az,c \end{matrix};q,\frac{d}{a}\right) \\
\label{eq:Bailey trans3}
 &=
 \frac{(bz,c/b,d/a,qc/abz)_{\infty}}{(z,c,q/a,cd/abz)_{\infty}}
 {_{2}\psi _2}\left(\begin{matrix}b,abz/c \\ bz,d \end{matrix};q,\frac{c}{b}\right),
\end{align}
which is a bilateral version of Heine's transformation formula:
\begin{gather*}
{_{2}\phi _1}\left(\begin{matrix}a,b \\ c \end{matrix};q,z\right)
 =
 \frac{(az,c/a)_{\infty}}{(z,c)_{\infty}}\,
 {_{2}\phi _1}\left(\begin{matrix}a,abz/c \\ az \end{matrix};q,\frac{c}{a}\right).
\end{gather*}
In fact, by taking the limit $z \rightarrow z/b$, $b\rightarrow \infty$, $c=0$ in (\ref{eq:Bailey trans0})--(\ref{eq:Bailey trans3}), we derive
\begin{align}
\label{eq:Bailey trans special1}
{}_1\psi_2\left(\begin{matrix}a\\0,d\end{matrix};q,z\right)
 &=
 \frac{(z,dq/az)_\infty}{(d,q/a)_\infty}\,{}_1\psi_2\left(\begin{matrix}az/d\\0,z\end{matrix};q,d\right) \\
\label{eq:Bailey trans special2}
 &=
 \frac{(d/a,dq/az)_\infty}{(d)_\infty}\,{}_2\psi_2\left(\begin{matrix}a,az/d\\0,0\end{matrix};q,\frac{d}{a}\right) \\
\label{eq:Bailey trans special3}
 &=
 \frac{(z,d/a)_{\infty}}{(q/a)_{\infty}}\,{}_0\psi_2\left(\begin{matrix}- \\z,d\end{matrix};q,az\right),
\end{align}
and we obtain (\ref{eq:sym mu func 1}), (\ref{eq:sym mu func 2}) and (\ref{eq:sym mu func 3}).
In particular, the expression (\ref{eq:sym mu func 3}) was pointed out by Choi (see \cite[Theorem 4]{C} and \cite[Theorem 5.1]{B}).

Thus the $\mu$-function is an interesting object deeply studied.
But, it seems that there are many problems to be clarified further.
For example, why the $\mu$-function is a two-variable function, whereas the universal mock theta function $g_{3}(x,q)$ is a one-variable function (i.e., what is the origin of the extra variable of the $\mu$-function?).
Also, why the factors~${\rm e}^{\pi{\rm i}u}$ in the numerator and~$\vartheta_{11}(v)$ in the denominator are needed.
Further, where the translation formula~(\ref{eq:mu translation}) comes from.
Some explanation has been given for these questions.
For example, Zwegers gives constructions of mock theta functions as indefinite theta series, of which $\mu$-function is a special case, and these naturally have theta functions in denominators which are explainable from the geometry of the quadratic forms. Also Zwegers' two-variable $\tilde{\mu}$-function is decomposed into a sum of meromorphic Jacobi form and one-variable Maa\ss--Jacobi form~\cite{R} (see also \cite[Theorem~8.18]{BFOR}).
However, as we will show in this paper, one can give another answer from the view of $q$-special functions.

By using the pseudo-periodicity equation (\ref{eq:mu pseudo periodicity}) and a bit of calculation, we obtain the following second-order $q$-difference equation for the $\mu$-function:
\begin{gather}
\label{eq:mu q-diff}
\left[T_{x}^2-q^{\frac{1}{2}}\left(1-\frac{x}{y}q\right)T_{x}-\frac{x}{y}q\right]\mu(x,y)=0, \qquad T_{x}f(x):=f(qx).
\end{gather}
This $q$-difference equation (\ref{eq:mu q-diff}) coincides with the specialization $a=q$ and the transformation $x\mapsto \frac{x}{y}$ of the following equation essentially:
\begin{gather}
\label{eq:q-Hermite}
\big[T_x^2-(1-xq)\sqrt{a}T_x-xq\big]f(x)=0,
\end{gather}
which is the transformation $a\mapsto \frac{q}{a}$, $u(xq)=\theta_q\big({-}\frac{\sqrt{a}}{xq}\big)f(x)$ of the $q$-Hermite--Weber equation:
\begin{gather*}
\big[axT_{x}^{2}+(1-x)T_{x}-1\big]u(x)=0.
\end{gather*}
The equation (\ref{eq:q-Hermite}), which we also call the ``$q$-Hermite--Weber equation'', is a typical example of the second-order $q$-difference equation of the Laplace type:
\begin{gather*} 
\big[(a_{0}+b_{0}x)T_{x}^{2}+(a_{1}+b_{1}x)T_{x}+(a_{2}+b_{2}x)\big]u(x)=0.
\end{gather*}

The $q$-difference equations of the Laplace type have long been studied.
For example, the $q$-difference equation satisfied by Heine's hypergeometric function ${}_2\phi_1$:
\begin{gather}
\label{eq:q-Heine diff}
\big[(c-abqx)T_{x}^{2}-(c+q-(a+b)qx)T_x+q(1-x)\big]f(x)=0,
\end{gather}
which is the most popular and master class in the following hierarchy of the second order $q$-difference equations, was discovered in the 19th century.
The $q$-Hermite--Weber equation~(\ref{eq:q-Hermite}) is one of some equations in the hierarchy obtained by taking some limits of $q$-difference equation~(\ref{eq:q-Heine diff}) (for example, see \cite[Figure~2]{O1}).

However, a systematic study of the global analysis of degenerate Laplace types had to wait for J.P.~Ramis, J.~Sauloy and C.~Zhang's work~\cite{RSZ} in the 21st century.
They introduced some $q$-Borel and $q$-Laplace transformations:
\begin{gather}
\mathcal{B}^{+}(f)(\xi):=\sum_{n\geq 0}a_{n}q^{\frac{n(n-1)}{2}}\xi^{n}, \qquad
\label{eq:q-Laplace}
 \mathcal{L}^{+}(f)(x,\lambda):=\sum_{n\in\Zz}\frac{f(\lambda q^n)}{\theta_q(\lambda q^{n}/x)},
\end{gather}
for the formal series
\[
f(x)=\sum_{n\geq 0}a_{n}x^{n} \in \Cz\llbracket x \rrbracket,
\]
which play a fundamental role in the study of the Laplace type $q$-difference equations.
In particular, C.~Zhang \cite{Zh} obtained some connection formulas for the $q$-convergent hypergeometric series ${}_2\phi_0(a,b;q,x)$ by applying these transformations.
By considering the degeneration of Zhang's result, C.~Zhang and Y.~Ohyama \cite{O2} gave some connection formulas for the $q$-Hermite series ${}_1\phi_1(a;0;q,x)$.

In view of the story discussed above, we introduce the following generalization of the $\mu$-function.
\begin{dfn}\label{def:mua}
Let $\alpha $, $a$ be complex parameters such that $u-\alpha\tau,v\in\Cz\backslash(\Zz+\Zz\tau)$, and such that $xa^{-1}, y \in \mathbb{C} \backslash \{q^{n}\}_{n \in \mathbb{Z}}$.
We define $\mu(u,v;\alpha)$ or $\mu(x,y;a)$ as the following series:
\begin{gather*}
\mu(u,v;\alpha,\tau)
 =
 \mu(u,v;\alpha)
 :=
 \frac{{\rm e}^{\pi{\rm i}\alpha(u-v)}}{\vartheta_{11}(v)}
 \sum_{n\in\Zz}(-1)^n{\rm e}^{2\pi{\rm i}(n+\frac{1}{2})v}q^\frac{n(n+1)}{2}
 \frac{\big({\rm e}^{2\pi{\rm i}u}q^{n+1}\big)_{\infty }}{\big({\rm e}^{2\pi{\rm i}u}q^{n-\alpha +1}\big)_{\infty }}, \\
\mu(x,y;a,q)
 =
 \mu(x,y;a)
 :=
 -{\rm i}q^{-\frac{1}{8}}\frac{(x/y)^\frac{\alpha}{2}}{\theta_q(-y)}\frac{(x)_\infty}{(x/a)_\infty}
 {}_1\psi_2\left(\begin{matrix}x/a\\0,x\end{matrix};q,y\right).
\end{gather*}
In particular, we have $\mu(u,v;0)=-{\rm i}q^{-\frac{1}{8}}$ and $\mu(u,v;1)=\mu(u,v)$.
\end{dfn}
If we substitute $x={\rm e}^{2\pi{\rm i}u}$, $y={\rm e}^{2\pi{\rm i}v}$ and $a={\rm e}^{2\pi{\rm i}\alpha \tau }=q^{\alpha }$, then $\mu(x,y;a,q)=\mu(u,v;\alpha,\tau )$.
The definition of $\mu(u,v;\alpha)$ is equal to the composition of the $q$-Borel and $q$-Laplace transformations of the formal solution
\[
\widetilde{f}_{0}(x)
 =
 {}_2\phi_0\left(\begin{matrix}a,0 \\ - \end{matrix};q, {\frac{x}{a}}\right)
\]
for the $q$-Hermite--Weber equation (\ref{eq:q-Hermite}) around $x=0$ (see Section~\ref{section2}).
\begin{thm}\label{thm:mu and q-Hermite--Weber}
Let $f_{0}(x,\lambda )$ be the image of the $q$-Borel and $q$-Laplace transformations of the fundamental solution $\widetilde{f}_{0}$ of the $q$-Hermite--Weber equation at $x=0$:
\[
f_{0}(x)
 :=
 x^{\frac{\alpha }{2}}\mathcal{L}^{+}\circ\mathcal{B}^{+}\big(\widetilde{f}_{0}\big)(x,\lambda).
\]
Then we have
\begin{gather}\label{eq:mu and q-Hermite--Weber}
f_{0}\big({\rm e}^{2\pi{\rm i}(u-v)},-{\rm e}^{2\pi{\rm i}u}\big)
 =\mu(u,v;\alpha).
\end{gather}
\end{thm}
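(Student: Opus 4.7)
The plan is a direct two-step computation: find $\mathcal{B}^{+}\big(\widetilde{f}_{0}\big)$ in closed form, apply $\mathcal{L}^{+}$, then substitute the prescribed $(x,\lambda)$ and compare termwise with Definition~\ref{def:mua}.

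First I would expand $\widetilde{f}_{0}(x)={}_{2}\phi_{0}(a,0;-;q,x/a)$ as a power series in $x$. Since $(0;q)_{n}=1$ for all $n\ge 0$ and the exponent $s-r+1=-1$ in ${}_{2}\phi_{0}$ attaches a factor $q^{-n(n-1)/2}$ to the $n$-th term, applying $\mathcal{B}^{+}$ exactly cancels this Gaussian factor and leaves ${}_{1}\phi_{0}(a;-;q,-\xi/a)$. The $q$-binomial theorem then collapses this to the closed form
\[
\mathcal{B}^{+}\big(\widetilde{f}_{0}\big)(\xi)=\frac{(-\xi;q)_{\infty}}{(-\xi/a;q)_{\infty}}.
\]

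Next I would substitute this into the definition of $\mathcal{L}^{+}$ and specialize $x={\rm e}^{2\pi{\rm i}(u-v)}$, $\lambda=-{\rm e}^{2\pi{\rm i}u}$, so that $-\lambda q^{n}={\rm e}^{2\pi{\rm i}u}q^{n}$ and $\lambda q^{n}/x=-{\rm e}^{2\pi{\rm i}v}q^{n}$. The bilateral series that results is
\[
\mathcal{L}^{+}\circ\mathcal{B}^{+}\big(\widetilde{f}_{0}\big)\big({\rm e}^{2\pi{\rm i}(u-v)},-{\rm e}^{2\pi{\rm i}u}\big)=\sum_{n\in\Zz}\frac{\big({\rm e}^{2\pi{\rm i}u}q^{n};q\big)_{\infty}}{\big({\rm e}^{2\pi{\rm i}u}q^{n-\alpha};q\big)_{\infty}}\frac{1}{\theta_{q}\big({-}{\rm e}^{2\pi{\rm i}v}q^{n}\big)}.
\]
The quasi-periodicity $\theta_{q}(q^{n}z)=q^{-n(n-1)/2}z^{-n}\theta_{q}(z)$, applied at $z=-{\rm e}^{2\pi{\rm i}v}$, pulls the overall factor $1/\theta_{q}\big({-}{\rm e}^{2\pi{\rm i}v}\big)$ out of the sum and replaces $1/\theta_{q}\big({-}{\rm e}^{2\pi{\rm i}v}q^{n}\big)$ by $(-1)^{n}{\rm e}^{2\pi{\rm i}nv}q^{n(n-1)/2}/\theta_{q}\big({-}{\rm e}^{2\pi{\rm i}v}\big)$.

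To finish, an index shift $n\mapsto n+1$ turns the quadratic exponent $q^{n(n-1)/2}$ into $q^{n(n+1)/2}$ and the Pochhammer ratio into $\big({\rm e}^{2\pi{\rm i}u}q^{n+1}\big)_{\infty}/\big({\rm e}^{2\pi{\rm i}u}q^{n-\alpha+1}\big)_{\infty}$, exactly as in Definition~\ref{def:mua}. The Jacobi triple product gives
\[
\vartheta_{11}(v)=-{\rm i}q^{1/8}{\rm e}^{-\pi{\rm i}v}\theta_{q}\big({-}{\rm e}^{2\pi{\rm i}v}\big),
\]
which converts the theta-denominator, and multiplication by the prefactor $x^{\alpha/2}={\rm e}^{\pi{\rm i}\alpha(u-v)}$ reproduces the defining series of $\mu(u,v;\alpha)$.

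Each of the three stages is a standard manipulation, so the proof is essentially bookkeeping. The main obstacle is reconciling the various multiplicative constants --- the $-{\rm i}q^{-1/8}$ and ${\rm e}^{-\pi{\rm i}v}$ hidden in $\vartheta_{11}(v)^{-1}$, the sign generated by the index shift, and the $(-1)^{n}$ produced by the quasi-periodicity of $\theta_{q}$ --- so that they combine precisely into the prefactor appearing in Definition~\ref{def:mua}. A convenient sanity check is to set $\alpha=1$, where $\mathcal{B}^{+}\big(\widetilde{f}_{0}\big)(\xi)=q/(q+\xi)$ and the resulting Appell--Lerch-type sum develops the pole structure $1/(1-{\rm e}^{2\pi{\rm i}u}q^{n})$ of Zwegers' original $\mu$-function.
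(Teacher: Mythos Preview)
Your proposal is correct and follows essentially the same route as the paper: compute $\mathcal{B}^{+}\big(\widetilde{f}_{0}\big)(\xi)=(-\xi)_{\infty}/(-\xi/a)_{\infty}$ by the $q$-binomial theorem, feed it into $\mathcal{L}^{+}$, invoke $\theta_{q}(q^{n}z)=q^{-n(n-1)/2}z^{-n}\theta_{q}(z)$ to extract the theta denominator, shift $n\mapsto n+1$, and convert $\theta_{q}\big({-}{\rm e}^{2\pi{\rm i}v}\big)$ to $\vartheta_{11}(v)$ via the triple product. The only cosmetic difference is that the paper performs the index shift before specializing $(x,\lambda)$ while you specialize first; the bookkeeping of constants you flag is indeed the only place care is needed, and your $\alpha=1$ sanity check is a sound way to confirm it.
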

As a corollary, we obtain an interpretation of $\mu(u,v)=\mu(u,v;1)$ that the original $\mu$-function is regarded as the image of the $q$-Borel and $q$-Laplace transformations of the fundamental solution~$\widetilde{f}_{0}$ of the $q$-Hermite--Weber equation
(very recently S.~Garoufalidis and C.~Wheeler~\cite{GW} pointed out this fact independently of us).
This interpretation gives some answers to our questions mentioned before.
For example, the $\mu$-function is a two-variable function with $x={\rm e}^{2\pi{\rm i} u}$ and $y={\rm e}^{2\pi{\rm i} v}$ due to the extra parameter $\lambda $ which arises from the $q$-Laplace transformation~(\ref{eq:q-Laplace}).
The denominator $\vartheta_{11}(v)$ of the $\mu$-function arises from the definition of $q$-Laplace transformation.
The numerator ${\rm e}^{\pi{\rm i} u}$ in $\mu(u,v)$ corresponds to the characteristic index of the solution around the origin of~(\ref{eq:q-Hermite}).

For this function $\mu(u,v;\alpha)$, we give the following formulas similar to those of the original $\mu$-function.
\begin{thm}\label{thm:Thm1}
\begin{gather}
\label{eq:main results0}
\mu(u+2\tau,v;\alpha)
 =
 \big(1-{\rm e}^{2\pi{\rm i} (u-v)}q\big)q^\frac{\alpha}{2}\mu(u+\tau,v;\alpha)+{\rm e}^{2\pi{\rm i}(u-v)}q\mu(u,v;\alpha), \\
\label{eq:main results1}
\mu(u,v;\alpha)
 =
 {\rm e}^{-\pi{\rm i} \alpha}\mu(u+1,v;\alpha)
 =
 {\rm e}^{\pi{\rm i}\alpha}\mu(u,v+1;\alpha), \\
\label{eq:main results2}
\mu(u+\tau,v;\alpha)
 =
 -{\rm e}^{2\pi{\rm i}(u-v)}q^\frac{\alpha}{2}\mu(u,v;\alpha)
 +{\rm e}^{\pi{\rm i}(u-v)}q^\frac{\alpha}{2}\mu(u,v;\alpha-1), \\
\label{eq:main results2-2}
\mu(u-\tau,v;\alpha)
 =
 q^\frac{\alpha}{2}\mu(u,v;\alpha)
 -2{\rm i}{\rm e}^{-\pi{\rm i}(u-v)}\sin(\pi\alpha\tau)\mu(u,v;\alpha+1), \\
\label{eq:main results3}
\mu(u+z,v+z;\alpha)
 =
 \frac{\vartheta_{11}(u+z)\vartheta_{11}(v+z-\alpha\tau)}{\vartheta_{11}(u+z-\alpha\tau)\vartheta_{11}(v+z)}{\rm e}^{2\pi{\rm i}\alpha(u-v)}\mu(v,u;\alpha) \nonumber \\
\hphantom{\mu(u+z,v+z;\alpha)=}{}
 -
 \frac{i(q^\alpha)_\infty(q)_\infty^2q^\frac{1-4\alpha}{8}\vartheta_{11}(z)\vartheta_{11}(u+v+z-\alpha\tau)}{\vartheta_{11}(u)\vartheta_{11}(v-\alpha\tau)\vartheta_{11}(u+z-\alpha\tau)\vartheta_{11}(v+z)} \nonumber \\
\hphantom{\mu(u+z,v+z;\alpha)=}{}\times {\rm e}^{\pi{\rm i}(\alpha-1)(u-v)}\, {}_1\phi_1\hyper{q^{1-\alpha}}{0}{q}{{\rm e}^{-2\pi{\rm i}(u-v)}q}, \\
\label{eq:main results4}
\mu(u,v;\alpha)
 =
 \mu(u+\tau,v+\tau;\alpha) \\
\label{eq:main results4-2}
 \hphantom{\mu(u,v;\alpha)}{} =
 \frac{\vartheta_{11}(v-\alpha\tau)\vartheta_{11}(u)}{\vartheta_{11}(u-\alpha\tau)\vartheta_{11}(v)}{\rm e}^{2\pi{\rm i}\alpha(u-v)}\mu(v,u;\alpha) \\
 \hphantom{\mu(u,v;\alpha)}{} =
\label{eq:main results4-3}
 \frac{\vartheta_{11}(v-\alpha\tau)\vartheta_{11}(u)}{\vartheta_{11}(u-\alpha\tau)\vartheta_{11}(v)}{\rm e}^{2\pi{\rm i}\alpha(u-v)}\mu(-u+\alpha\tau,-v+\alpha\tau;\alpha), \\
\label{eq:main results5}
2\cos\pi(u-v)\mu(u,v;\alpha)
 =
 (1-q^{-\alpha})\mu(u,v;\alpha+1)+\mu(u,v;\alpha-1).
\end{gather}
\end{thm}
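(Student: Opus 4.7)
The overall strategy is to combine three ingredients: (i) the $q$-Hermite--Weber equation~(\ref{eq:q-Hermite}) which $\mu(u,v;\alpha)$ satisfies as a consequence of Theorem~\ref{thm:mu and q-Hermite--Weber}, (ii) elementary manipulations of the Pochhammer-type factor $(e^{2\pi{\rm i} u}q^{n+1})_\infty / (e^{2\pi{\rm i} u}q^{n-\alpha+1})_\infty$ in the defining series, and (iii) the Bailey transformations~(\ref{eq:Bailey trans special1})--(\ref{eq:Bailey trans special3}) of the $_{1}\psi_{2}$-series appearing in the multiplicative form of $\mu$. I will treat the formulas in increasing order of difficulty.

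First, the pseudo-periodicity~(\ref{eq:main results0}) is the $q$-difference equation~(\ref{eq:q-Hermite}) under the substitution $x\mapsto x/y$, $a=q^{\alpha }$, which is exactly the equation satisfied by $\mu(x,y;a)$ by Theorem~\ref{thm:mu and q-Hermite--Weber}; once Theorem~\ref{thm:mu and q-Hermite--Weber} is in hand this is immediate. The periodicity~(\ref{eq:main results1}) follows at once from $e^{2\pi{\rm i}}=1$ together with the prefactor $e^{\pi{\rm i}\alpha(u-v)}$. The symmetry~(\ref{eq:main results4}) is essentially free from Theorem~\ref{thm:mu and q-Hermite--Weber}: the $q$-Laplace transform~(\ref{eq:q-Laplace}) is invariant under $\lambda\mapsto q\lambda$, and $(u,v)\mapsto(u+\tau,v+\tau)$ precisely replaces $\lambda=-e^{2\pi{\rm i}u}$ by $q\lambda$ while keeping $e^{2\pi{\rm i}(u-v)}$ fixed.

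For the forward/backward shift formulas~(\ref{eq:main results2}), (\ref{eq:main results2-2}) and the $\alpha$-recurrence~(\ref{eq:main results5}), I would work directly with the additive series. Using $(a)_\infty=(1-a)(qa)_\infty$, the ratio $(e^{2\pi{\rm i} u}q^{n+1})_\infty/(e^{2\pi{\rm i} u}q^{n-\alpha+1})_\infty$ can be split in the two ways needed to produce either a shift $u\mapsto u\pm\tau$ or a shift $\alpha\mapsto\alpha\pm 1$. Combining these splittings termwise in the sum over $n$ yields the claimed three-term relations; (\ref{eq:main results5}) in particular reduces to a standard three-term recurrence for the associated $_{1}\psi _{2}$, and (\ref{eq:main results2-2}) then follows from (\ref{eq:main results2}) and (\ref{eq:main results5}). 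The symmetry~(\ref{eq:main results4-2}) exchanging $u$ and $v$ is the key place where the Bailey transformations enter: applying the appropriate specialization among (\ref{eq:Bailey trans special1})--(\ref{eq:Bailey trans special3}) to the $_{1}\psi_{2}$ in Definition~\ref{def:mua} interchanges the roles of $x$ and $y$, and tracking the resulting theta-prefactor gives~(\ref{eq:main results4-2}); the inversion~(\ref{eq:main results4-3}) then follows by combining (\ref{eq:main results4}) and (\ref{eq:main results4-2}) with $(u,v)\mapsto(-u,-v)$.

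The main obstacle is the translation formula~(\ref{eq:main results3}), which is the $\alpha$-generalization of~(\ref{eq:mu translation}) and features a nontrivial $_{1}\phi_{1}$ correction. The plan is to compare the two expressions $\mu(u+z,v+z;\alpha)$ and $\mu(v,u;\alpha)$ obtained by two different Bailey-type reductions of the underlying $_{1}\psi_{2}$: one reduction produces the first term on the right-hand side (through~(\ref{eq:main results4-2}) applied after a translation), while the difference has to be identified with the $_{1}\phi_{1}$ term. Concretely, I would isolate the singular part at $e^{2\pi{\rm i}u}=q^{-n}$ by splitting the bilateral series into its ``positive'' and ``negative'' halves; the positive half yields exactly a $_{1}\phi_{1}$, and the Kronecker-type theta identity
\begin{gather*}
\frac{{\rm i}(q)_{\infty }^{3}\vartheta_{11}(z)\vartheta_{11}(u+v+z-\alpha\tau)}{\vartheta_{11}(u)\vartheta_{11}(v-\alpha\tau)\vartheta_{11}(u+z-\alpha\tau)\vartheta_{11}(v+z)}
\end{gather*}
should emerge from a residue/partial-fraction computation analogous to~(\ref{eq:Kronecker formula}). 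Matching the resulting prefactors and specializing at $\alpha=1$ to recover~(\ref{eq:mu translation}) provides a useful consistency check, and the $q\to 1$-type limit $\alpha\to 0$ gives another. The combinatorial bookkeeping of the theta factors is expected to be the most delicate step of the whole theorem.
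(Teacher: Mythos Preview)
Your treatment of~(\ref{eq:main results0}), (\ref{eq:main results1}), (\ref{eq:main results2}), and (\ref{eq:main results4}) matches the paper's, and the direct series splitting for the shift formulas is exactly what the paper does. The approach to the $(u,v)$-symmetry~(\ref{eq:main results4-2}) via the Bailey transforms~(\ref{eq:Bailey trans special1})--(\ref{eq:Bailey trans special3}) is also valid---the paper mentions this as an alternative route after Theorem~\ref{Theorem 3}---though the paper's primary proof is different (see below). For~(\ref{eq:main results5}) your plan is a bit loose: the identity is \emph{not} a termwise three-term relation for the defining $_{1}\psi_{2}$ (the putative identity depends on the summation index), but it does follow from (\ref{eq:main results0}), (\ref{eq:main results2}) and~(\ref{eq:main results2-2}) combined, and~(\ref{eq:main results2-2}) itself can indeed be checked by a direct Pochhammer manipulation. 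So with a small reorganization that block is fine.

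The genuine gap is the translation formula~(\ref{eq:main results3}). The paper does \emph{not} prove it by Bailey-type reductions or by splitting the bilateral series; instead it is obtained in one line from the connection formula~(\ref{eq:tuchimi connection}) of Theorem~\ref{thm:tuchimi connection}, by substituting $x=e^{2\pi i(u-v)}$, $\lambda=-e^{2\pi i v}$, $\lambda'=-e^{2\pi i(u+z)}$ and invoking~(\ref{eq:mu and q-Hermite--Weber}). The identity comparing $f_{0}(x,\lambda')$ with $f_{\infty}(x,\lambda)$ and $g_{\infty}(x)$ is itself proved by an elliptic-function argument (showing a certain combination $C(x)$ is doubly periodic with vanishing residues, hence constant), and that is where the Kronecker-type theta product you wrote down actually appears. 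Your sketch---split into positive and negative halves, identify the remainder with a $_{1}\phi_{1}$, match theta prefactors by a residue computation---is plausible in outline but you have not identified the mechanism that forces the specific $\vartheta_{11}(z)\vartheta_{11}(u+v+z-\alpha\tau)$ combination; without the connection-formula viewpoint (or an equivalent elliptic-function constancy argument) this step remains incomplete. Relatedly, the paper's logical architecture is the reverse of yours: it proves~(\ref{eq:main results3}) first via Theorem~\ref{thm:tuchimi connection}, and then obtains (\ref{eq:main results4}), (\ref{eq:main results4-2}), (\ref{eq:main results4-3}) as the specializations $z=\tau$, $z=0$, $z=-u-v+\alpha\tau$, with (\ref{eq:main results5}) following from (\ref{eq:main results4-2}) and (\ref{eq:main results2}) by computing $\mu(u,v+\tau;\alpha)$ two ways.
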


We see that these formulas correspond to that of the original $\mu$-function as the periodicity: $(\ref{eq:main results1})\leftrightarrow(\ref{eq:mu periodicity})$,
forward shift: $(\ref{eq:main results2})\leftrightarrow(\ref{eq:mu pseudo periodicity})$,
translation: $(\ref{eq:main results3})\leftrightarrow(\ref{eq:mu translation})$,
$\tau$-periodicity: $(\ref{eq:main results4})\leftrightarrow(\ref{eq:mu symmetry})$,
symmetry: $(\ref{eq:main results4-2})\leftrightarrow(\ref{eq:mu symmetry2})$,
pseudo periodicity: $(\ref{eq:main results4-3})\leftrightarrow(\ref{eq:mu symmetry3})$.
The equation (\ref{eq:main results0}) is a rewriting that $\mu(u,v;\alpha)$ satisfies the $q$-Hermite--Weber equation (\ref{eq:q-Hermite}).
Also, the property~(\ref{eq:main results5}) is one of the specific properties of $\mu(u,v;\alpha)$, which coincides essentially with the $q$-Bessel equation:
\[
\left[T_x-\big(q^\frac{\nu}{2}+q^{-\frac{\nu}{2}}\big)T_x^\frac{1}{2}+\left(1+\frac{x^2}{4}\right)\right]f(x)=0, \qquad T_{x}^{\frac{1}{2}}f(x):=f\big(q^{\frac{1}{2}}x\big).
\]
Also, the second term on the right-hand side of (\ref{eq:main results3}) is written by the $q$-Bessel function (see Corollary \ref{rmk q-Bessel}).
Further, we prove the translation formula (\ref{eq:main results3}) as a connection formula for the $q$-Hermite--Weber equation, that means the mysterious translation (\ref{eq:main results3}) is regarded as a~variation of a~connection formula (see Theorem \ref{thm:tuchimi connection} and (\ref{eq:tuchimi connection})).

We also immediately obtain the $q$-hypergeometric expressions for $\mu(u,v;\alpha)$ corresponding to (\ref{eq:sym mu func 1}), (\ref{eq:sym mu func 2}) and (\ref{eq:sym mu func 3}) from (\ref{eq:Bailey trans special1}), (\ref{eq:Bailey trans special2}) and (\ref{eq:Bailey trans special3}).
\begin{thm}\label{Theorem 3}
\begin{align*}
\mu(x,y;a)
 &=
 -{\rm i}q^{-\frac{1}{8}}\frac{(x/y)^\frac{\alpha}{2}}{\theta_q(-x/a)}\frac{(aq/y)_{\infty }}{(q/y)_{\infty }}{}_1\psi_2\left(\begin{matrix}y/a\\0,y\end{matrix};q,x\right) \\
 &=
 -{\rm i}q^{-\frac{1}{8}}\left(\frac{x}{y}\right)^\frac{\alpha}{2}\frac{(a,q,aq/x,aq/y)_{\infty }}{\theta_{q}(-y)\theta_{q}(-x/a)}{}_2\psi_2\left(\begin{matrix}x/a,y/a\\0,0\end{matrix};q,a\right) \\
 &=
 -{\rm i}q^{-\frac{1}{8}}\left(\frac{x}{y}\right)^\frac{\alpha}{2}\frac{(a,q,x,y)_{\infty }}{\theta_{q}(-y)\theta_{q}(-x/a)}{}_0\psi_2\left(\begin{matrix}-\\x,y\end{matrix};q,\frac{xy}{a}\right).
\end{align*}
\end{thm}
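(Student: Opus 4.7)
The plan is to start from the definition of $\mu(x,y;a)$ given in Definition \ref{def:mua},
\[
\mu(x,y;a) = -{\rm i}q^{-\frac{1}{8}}\frac{(x/y)^{\alpha/2}}{\theta_q(-y)}\frac{(x)_\infty}{(x/a)_\infty}\,{}_1\psi_2\!\left(\begin{matrix}x/a\\0,x\end{matrix};q,y\right),
\]
and apply each of the three bilateral Bailey transformations (\ref{eq:Bailey trans special1}), (\ref{eq:Bailey trans special2}), (\ref{eq:Bailey trans special3}) in turn with the parameter specialization $a\mapsto x/a$, $d\mapsto x$, $z\mapsto y$. This immediately converts the base ${}_1\psi_2$ into ${}_1\psi_2$ (with arguments $y/a$, $y$, $x$), into ${}_2\psi_2$ (with numerator parameters $x/a$, $y/a$ and argument $a$), and into ${}_0\psi_2$ (with denominator parameters $x$, $y$ and argument $xy/a$), respectively. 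The rest is arithmetic on $q$-shifted factorials.

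For the first identity, the transformation produces the prefactor $(y,aq/y)_\infty/(x,aq/x)_\infty$; combined with the original $(x)_\infty/(x/a)_\infty$ it becomes $(y,aq/y)_\infty/((x/a)_\infty(aq/x)_\infty)$, and the factorization $\theta_q(-u)=(q,u,q/u)_\infty$ rewrites this cleanly as $\theta_q(-y)(aq/y)_\infty/(\theta_q(-x/a)(q/y)_\infty)$, matching the claimed form. For the second, the transformation adds the prefactor $(a,aq/y)_\infty/(x)_\infty$ which cancels the $(x)_\infty$ already present; the only step needed is to multiply and divide by $(q,aq/x)_\infty$ and recognize $(q,x/a,aq/x)_\infty=\theta_q(-x/a)$. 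For the third, the transformation produces $(y,a)_\infty/(aq/x)_\infty$, giving the combined prefactor $(x,y,a)_\infty/((x/a)_\infty(aq/x)_\infty)$, which after multiplying and dividing by $(q)_\infty$ becomes exactly the claimed $(a,q,x,y)_\infty/\theta_q(-x/a)$; one finally uses the symmetry of ${}_0\psi_2$ in its two lower parameters to rewrite the argument list $(y,x)$ as $(x,y)$.

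There is no real obstacle: the only bookkeeping risk is keeping track of the numerous infinite-product factors and ensuring the $\theta_q$-expansion is applied consistently, but each of the three cases reduces to the single identity $\theta_q(-u)=(q,u,q/u)_\infty$ applied to $u=x/a$ (and in the first case also to $u=y$). The factor $(x/y)^{\alpha/2}$ and the sign $-{\rm i}q^{-1/8}$ are untouched throughout. Hence all three equalities of Theorem \ref{Theorem 3} follow directly and simultaneously from Definition \ref{def:mua} together with (\ref{eq:Bailey trans special1})--(\ref{eq:Bailey trans special3}).
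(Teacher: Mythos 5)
Your proposal is correct and is exactly the paper's route: the paper derives Theorem~\ref{Theorem 3} immediately by applying the specialized Bailey transformations (\ref{eq:Bailey trans special1})--(\ref{eq:Bailey trans special3}) (with $a\mapsto x/a$, $d\mapsto x$, $z\mapsto y$) to the ${}_1\psi_2$ in Definition~\ref{def:mua}, and your product bookkeeping via $\theta_q(-u)=(q,u,q/u)_\infty$ checks out in all three cases.
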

Note that the symmetry (\ref{eq:main results4-2}) is proved by a specialization of the translation (\ref{eq:main results3}), but we also get another proof from these $q$-hypergeometric expressions.

The above results are for general complex parameters with $\alpha$, but by restricting $\alpha$ to the integer $k$, we obtain the following simplified formulation.
\begin{cor}\label{cor:Thm1 k}
Let $k$ be an integer, we have
\begin{gather}
\mu(u+1,v;k)
 =
 \mu(u,v+1;k)
 =
 (-1)^k\mu(u,v;k), \nonumber\\
\mu(u+\tau,v;k)
 =
 -{\rm e}^{2\pi{\rm i}(u-v)}q^\frac{k}{2}\mu(u,v;k)+{\rm e}^{\pi{\rm i}(u-v)}q^\frac{k}{2}\mu(u,v;k-1), \nonumber\\
\mu(u-\tau,v;k)
 =
 q^\frac{k}{2}\mu(u,v;k)
 -2{\rm i}{\rm e}^{-\pi{\rm i}(u-v)}\sin(\pi k \tau)\mu(u,v;k+1), \nonumber\\
\mu(u+z,v+z;k+1)
 =\mu(u,v;k+1) +\frac{{\rm i} q^{\frac{1}{8}}(q)_{\infty }^{3}\vartheta_{11}(z)\vartheta_{11}(u+v+z)}{\vartheta_{11}(u)\vartheta_{11}(v)\vartheta_{11}(u+z)\vartheta_{11}(v+z)} \nonumber \\
\hphantom{\mu(u+z,v+z;k+1)=}{}
\times \frac{{\rm e}^{-\pi{\rm i}k(u-v)}}{(q^{-k})_k}\,{}_1\phi_1\left(\begin{matrix}q^{-k}\\0\end{matrix};q,{\rm e}^{2\pi{\rm i}(u-v)}q\right), \nonumber\\
\mu(u,v;k) =
 \mu(u+\tau,v+\tau;k)\nonumber \\
\hphantom{\mu(u,v;k)}{}
=
 \mu(v,u;k) \nonumber\\
\hphantom{\mu(u,v;k)}{} =
 \mu(-u,-v;k), \nonumber\\
\label{eq:mu k 5}
2\cos\pi(u-v)\mu(u,v;k)
 =
 \big(1-q^{-k}\big)\mu(u,v;k+1)+\mu(u,v;k-1).
\end{gather}
In particular, if $k$ is a positive integer, we have
\begin{align}
\mu(u,v;k)
 &=
 \frac{{\rm e}^{\pi{\rm i}k(u-v)}}{\vartheta_{11}(v)}\sum_{n\in\Zz}(-1)^n{\rm e}^{2\pi{\rm i}\left(n+\frac{1}{2}\right)v}q^\frac{n(n+1)}{2}\prod_{l=0}^{k-1}\frac{1}{1-{\rm e}^{2\pi{\rm i}u}q^{n-l}} \nonumber\\
\label{eq:mu k 7}
 &=
 {\rm e}^{-\pi{\rm i}(k-1)(u-v-\tau)}\sum_{j=0}^{k-1}\frac{(-1)^{k-1-j}}{(q)_j(q)_{k-1-j}}q^\frac{(k-1-j)^2}{2}\mu(u-j\tau,v).
\end{align}
\end{cor}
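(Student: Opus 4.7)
My plan is to derive Corollary \ref{cor:Thm1 k} in three stages: first by reading off the bulk of the identities as specializations of Theorem \ref{thm:Thm1} at $\alpha=k\in\Zz$, then by performing a delicate theta-function collapse in the translation formula, and finally by computing the two series expressions directly from Definition~\ref{def:mua}.

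Stage 1 (specialization). The periodicity, forward/backward shift, $\tau$-periodicity, symmetry, pseudo-periodicity, and three-term recurrence at $\alpha=k$ follow from (\ref{eq:main results1}), (\ref{eq:main results2}), (\ref{eq:main results2-2}), (\ref{eq:main results4})--(\ref{eq:main results4-3}), and (\ref{eq:main results5}) with $e^{\pm\pi i k}=(-1)^k$. The symmetry $\mu(u,v;k)=\mu(v,u;k)$, however, requires the theta prefactor in (\ref{eq:main results4-2}) to collapse. I iterate the quasi-periodicity $\vartheta_{11}(w+\tau)=-q^{-1/2}e^{-2\pi i w}\vartheta_{11}(w)$ to get $\vartheta_{11}(w-k\tau)=(-1)^k q^{-k^2/2}e^{2\pi i k w}\vartheta_{11}(w)$, from which $\frac{\vartheta_{11}(v-k\tau)\vartheta_{11}(u)}{\vartheta_{11}(u-k\tau)\vartheta_{11}(v)}=e^{-2\pi ik(u-v)}$, cancelling the $e^{2\pi i k(u-v)}$ in (\ref{eq:main results4-2}).

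Stage 2 (translation at $\alpha=k+1$; this will be the main obstacle). In (\ref{eq:main results3}) the same quasi-periodicity reduces $\frac{\vartheta_{11}(u+z)\vartheta_{11}(v+z-(k+1)\tau)}{\vartheta_{11}(u+z-(k+1)\tau)\vartheta_{11}(v+z)}$ to $e^{-2\pi i(k+1)(u-v)}$, turning the first term on the right into $\mu(v,u;k+1)=\mu(u,v;k+1)$ by Stage~1. For the second term I must align three rewrites: replace $(q^{k+1})_{\infty}=(q)_{\infty}/(q)_k$; apply the quasi-periodicity to the three $(k+1)\tau$-shifted $\vartheta_{11}$-factors in the ratio, which together contribute $(-1)^{k+1}q^{(k+1)^2/2}$ after the exponential factors cancel; and use $(q^{-k})_k=(-1)^kq^{-k(k+1)/2}(q)_k$ to rewrite the resulting constant into the $1/(q^{-k})_k$ form of the corollary. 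What remains is the ${}_1\phi_1$ identity
\[
e^{\pi i k(u-v)}\,{}_1\phi_1\hyper{q^{-k}}{0}{q}{e^{-2\pi i(u-v)}q}=e^{-\pi i k(u-v)}\,{}_1\phi_1\hyper{q^{-k}}{0}{q}{e^{2\pi i(u-v)}q},
\]
which I prove term by term by reversing the summation index $n\mapsto k-n$ in the finite ${}_1\phi_1$: using $(q^{-k})_n=(-1)^nq^{n(n-1)/2-kn}(q)_k/(q)_{k-n}$, both sides rewrite as $\sum_n \frac{(q)_k\,q^{n^2-kn}}{(q)_n(q)_{k-n}}\,e^{2\pi i(u-v)(n-k/2)}$, which is manifestly $n\mapsto k-n$ symmetric.

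Stage 3 (series expressions). For $k\in\Zz_{>0}$ the ratio of infinite products in Definition~\ref{def:mua} telescopes as
\[
\frac{\big(e^{2\pi iu}q^{n+1}\big)_{\infty}}{\big(e^{2\pi iu}q^{n-k+1}\big)_{\infty}}=\prod_{l=0}^{k-1}\frac{1}{1-e^{2\pi iu}q^{n-l}},
\]
which gives the first series form at once. For the expansion into shifts of $\mu(u,v)=\mu(u,v;1)$, I apply a partial-fraction decomposition in $X=e^{2\pi iu}$ of the finite product, $\prod_{l=0}^{k-1}\frac{1}{1-Xq^{n-l}}=\sum_{j=0}^{k-1}\frac{B_j}{1-Xq^{n-j}}$ with $n$- and $X$-independent residues $B_j=\frac{(-1)^{k-1-j}q^{(k-1-j)(k-j)/2}}{(q)_j(q)_{k-1-j}}$. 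Each of the resulting inner sums is recognized, via the change $u\mapsto u-j\tau$ in the original series, as an explicit scalar multiple of $\mu(u-j\tau,v)$; collecting the exponential and $q$-power factors against the $e^{\pi ik(u-v)}/\vartheta_{11}(v)$ prefactor yields (\ref{eq:mu k 7}). The only step requiring care is the bookkeeping of the overall $e^{\cdot(u-v)}q^{\cdot}$ normalization; the rest is routine.
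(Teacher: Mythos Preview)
Your proof is correct and follows essentially the same route as the paper: the paper states that all identities except (\ref{eq:mu k 7}) ``are obtained immediately from Theorem~\ref{thm:Thm1}'' and then proves (\ref{eq:mu k 7}) by the same telescoping of the Pochhammer ratio and the same partial-fraction decomposition you give in Stage~3. Your Stages~1--2 simply make explicit the theta-quasi-periodicity collapse and the terminating ${}_1\phi_1$ symmetry that the paper leaves to the reader; one minor point of phrasing is that the two ${}_1\phi_1$ sums are related by the reindexing $n\mapsto k-n$ rather than each being literally invariant under it, but your computation shows exactly that.
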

Also from (\ref{eq:mu k 5}), we obtain the following result which is non-trivial from the definition of $\mu(u,v;\alpha )$.
\begin{thm}\label{thm:mu and CqH}
Let $k$ be a non-negative integer, we have
\begin{gather*}
\mu(u,v;-k)
 =
 -{\rm i}q^{-\frac{1}{8}}H_{k}(\cos{\pi (u-v)}\mid q),
\end{gather*}
where $H_{k}(\cos{\pi (u-v)}\mid q)$ is a continuous $q$-Hermite polynomial of degree $k$ {\rm \cite[p.~543]{KLS}}:
\[
H_{k}(\cos{\pi (u-v)} \mid q)
 :=
 \sum_{l=0}^{k}
 \frac{(q)_{k}}{(q)_{l}(q)_{k-l}}
 {\rm e}^{\pi{\rm i}(k-2l)(u-v)}.
\]
\end{thm}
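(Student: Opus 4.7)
The plan is to prove this by induction on $k$, using the three-term recurrence (\ref{eq:mu k 5}) as the main engine and comparing it with the well-known three-term recurrence satisfied by the continuous $q$-Hermite polynomials. Since the recurrence (\ref{eq:mu k 5}) is already available (it is part of Corollary \ref{cor:Thm1 k}), no further analytic input should be necessary.

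First I would substitute $k \mapsto -n$ in (\ref{eq:mu k 5}), which produces
\begin{gather*}
2\cos\pi(u-v)\,\mu(u,v;-n)=(1-q^{n})\mu(u,v;-n+1)+\mu(u,v;-n-1),
\end{gather*}
or equivalently
\begin{gather*}
\mu(u,v;-(n+1))=2\cos\pi(u-v)\,\mu(u,v;-n)-(1-q^{n})\mu(u,v;-(n-1)).
\end{gather*}
On the polynomial side, the continuous $q$-Hermite polynomials $H_{n}(x\mid q)$ with $x=\cos\theta$ obey the standard recurrence $H_{n+1}(x\mid q)=2xH_{n}(x\mid q)-(1-q^{n})H_{n-1}(x\mid q)$, which is readily verified directly from the explicit sum defining $H_{k}$ in the statement by a short $q$-binomial manipulation (or quoted from \cite{KLS}). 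Setting $F_{n}(u,v):=-{\rm i}q^{-\frac{1}{8}}H_{n}(\cos\pi(u-v)\mid q)$, the sequence $F_{n}$ satisfies exactly the same three-term recurrence as $\mu(u,v;-n)$.

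It then remains to check the two base cases. The identity $\mu(u,v;0)=-{\rm i}q^{-\frac{1}{8}}=F_{0}(u,v)$ is already recorded in Definition \ref{def:mua}. For $n=1$, the case $k=0$ of the recurrence gives $\mu(u,v;-1)=2\cos\pi(u-v)\,\mu(u,v;0)=-{\rm i}q^{-\frac{1}{8}}\bigl({\rm e}^{\pi{\rm i}(u-v)}+{\rm e}^{-\pi{\rm i}(u-v)}\bigr)$, which is precisely $F_{1}(u,v)$ because $H_{1}(\cos\pi(u-v)\mid q)={\rm e}^{\pi{\rm i}(u-v)}+{\rm e}^{-\pi{\rm i}(u-v)}$. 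Induction on $n$ then yields $\mu(u,v;-n)=F_{n}(u,v)$ for all $n\ge 0$, which is the claimed equality.

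There is no real obstacle in this argument; the only points that require minor care are the book-keeping of the sign and the factor $(1-q^{n})$ when passing from $k$ to $-n$ in (\ref{eq:mu k 5}), and confirming the normalization of $H_{k}$ as given in the statement matches the convention under which the recurrence above holds. Both are elementary, so the whole proof reduces to the identification of two recurrences together with two initial values.
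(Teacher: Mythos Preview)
Your proposal is correct and follows essentially the same approach as the paper: the paper's proof also reduces everything to matching the three-term recurrence (\ref{eq:mu k 5}) with the standard recurrence $2xH_n=H_{n+1}+(1-q^n)H_{n-1}$ for the continuous $q$-Hermite polynomials, and then verifies the two initial values $k=0$ (via the triple product identity, equivalently the fact recorded in Definition~\ref{def:mua}) and $k=1$ (via the case $k=0$ of (\ref{eq:mu k 5})). The only cosmetic difference is that the paper packages the argument by setting $\widehat{H}_k(x\mid q):={\rm i}q^{1/8}\mu(u,v;-k)$ rather than your $F_n$, but the content is identical.
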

From this theorem, $\{\mu(u,v;k+1)\}_{k\geq 0}$ is a family of ``minus degree'' continuous $q$-Hermite polynomials, and in particular, the original $\mu$-function is regarded as a ``$-1$ degree'' continuous $q$-Hermite polynomial.

To give some relations of $\{\mu(u,v;k)\}_{k \in \mathbb{Z}}$, we introduce the following generating function of $\{\mu(u,v;k+1)\}_{k\geq 0}$:
\begin{gather*}
S(u,v,r)=S(r):=\sum_{k=0}^{\infty }\mu(u,v;k+1)r^{k},
\end{gather*}
which is a variation (i.e., minus degree version) of the well-known generating function of the continuous $q$-Hermite polynomials (for example, see \cite[p.~542]{KLS}):
\begin{gather}
\label{eq:gen func of CqH}
\sum_{n\geq0}\frac{H_n(\cos\theta \mid q)}{(q)_n}r^n=\frac{1}{\big(r{\rm e}^{{\rm i}\theta},r{\rm e}^{-{\rm i}\theta}\big)_\infty}.
\end{gather}
For the generating function $S(r)$, we obtain the following $q$-difference relations and expressions.
\begin{thm}\label{thm:Sr}\quad
\begin{enumerate}\itemsep=0pt
\item[$(1)$] The generating function $S(r)$ satisfies the following $q$-difference equations:
\begin{gather}
\label{eq:Sr rec1}
S(r)
 =
 \big(1-r{\rm e}^{\pi{\rm i}(u-v)}q\big)\big(1-r{\rm e}^{-\pi{\rm i}(u-v)}q\big)S(rq)-{\rm i}rq^\frac{7}{8}, \\
 \big[\big(1-r{\rm e}^{\pi{\rm i}(u-v)}q^{2}\big)\big(1-r{\rm e}^{-\pi{\rm i}(u-v)}q^{2}\big)T_r^2 \nonumber \\
\label{eq:Sr rec2}
\qquad{} -
 \big(1+q\big(1-r{\rm e}^{\pi{\rm i}(u-v)}q\big)\big(1-r{\rm e}^{-\pi{\rm i}(u-v)}q\big)\big)T_r+q\big]S(r)=0.
\end{gather}
\item[$(2)$] The generating function $S(r)$ has the following closed forms:
\begin{gather}
S(r)
 =
 \big(r{\rm e}^{\pi{\rm i}(u-v)}q,r{\rm e}^{-\pi{\rm i}(u-v)}q\big)_{\infty }
 \mu(u,v)-{\rm i}rq^{\frac{7}{8}}\,
 {_{3}\phi _2}\left(\begin{matrix}q, r{\rm e}^{\pi{\rm i}(u-v)}q, r{\rm e}^{-\pi{\rm i}(u-v)}q \\ 0,0 \end{matrix};q,q\right) \nonumber\\
\hphantom{S(r)}{}
=
 \big(r{\rm e}^{\pi{\rm i}(u-v)}q,r{\rm e}^{-\pi{\rm i}(u-v)}q\big)_{\infty } \nonumber \\
\hphantom{S(r)=}{}
 \times \left\{\mu(u,v)-\frac{{\rm i}rq^{\frac{7}{8}}}{1-q}\Phi^{(1)}\left(\begin{matrix}a;0,0 \\ q^{2} \end{matrix};q;r{\rm e}^{\pi{\rm i}(u-v)}q, r{\rm e}^{-\pi{\rm i}(u-v)}q\right)\right\}\label{eq:Sr expression2} \\
\label{eq:Sr expression3}
\hphantom{S(r)}{}
 =
 \big(r{\rm e}^{\pi{\rm i}(u-v)}q,r{\rm e}^{-\pi{\rm i}(u-v)}q\big)_{\infty }
 \sum_{m\geq 0}\frac{\mu(u,v;1-m )}{(q)_{m}}q^{m}r^{m},
\end{gather}
where $\Phi^{(1)}$ is the $q$-Appell hypergeometric function:
\begin{gather}
\label{q-Appell}
\Phi^{(1)}\left(\begin{matrix}a;b_{1},b_{2} \\ c \end{matrix};q;x, y\right)
 :=
 \sum_{m,n\geq0}
 \frac{(a)_{m+n}(b_1)_m(b_2)_n}{(c)_{m+n}(q)_m(q)_n}x^{m}y^{n}.
\end{gather}
\end{enumerate}
\end{thm}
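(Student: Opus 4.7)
The plan is to prove the first-order $q$-difference equation (\ref{eq:Sr rec1}) directly from the three-term recurrence (\ref{eq:main results5}), derive (\ref{eq:Sr rec2}) by eliminating the inhomogeneity, then solve (\ref{eq:Sr rec1}) in closed form to produce the three expressions in part~(2).

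\textbf{Step 1 (the two recurrences).} Set $c:=2\cos\pi(u-v)$ and $A(r):=(1-re^{\pi{\rm i}(u-v)}q)(1-re^{-\pi{\rm i}(u-v)}q)=1-cqr+q^2r^2$. I match the coefficients of $r^k$ on both sides of $S(r)-A(r)S(rq)=-{\rm i}rq^{7/8}$. For $k\ge 2$, the coefficient equals
\[
\mu(u,v;k+1)-q^k\bigl[\mu(u,v;k+1)-c\,\mu(u,v;k)+\mu(u,v;k-1)\bigr],
\]
which vanishes because (\ref{eq:main results5}) at $\alpha=k$ rewrites the bracket as $q^{-k}\mu(u,v;k+1)$. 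The coefficient at $r^0$ is $0$ trivially, while the coefficient at $r^1$ simplifies to $(1-q)\mu(u,v;2)+cq\mu(u,v;1)$; applying (\ref{eq:main results5}) at $\alpha=1$ together with $\mu(u,v;0)=-{\rm i}q^{-1/8}$ collapses this to $-{\rm i}q^{7/8}$, establishing (\ref{eq:Sr rec1}). Equation (\ref{eq:Sr rec2}) follows at once: shifting (\ref{eq:Sr rec1}) by $r\mapsto rq$ gives $A(rq)S(rq^2)=S(rq)+{\rm i}rq^{15/8}$, and multiplying (\ref{eq:Sr rec1}) by $q$ gives $qS(r)=qA(r)S(rq)-{\rm i}rq^{15/8}$; substituting both into the left-hand side of (\ref{eq:Sr rec2}) makes the two inhomogeneous contributions cancel and the remaining $S(rq)$ terms telescope to zero.

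\textbf{Step 2 (the three closed forms).} Set $P(r):=(re^{\pi{\rm i}(u-v)}q, re^{-\pi{\rm i}(u-v)}q)_\infty$, so that $P(r)=A(r)P(rq)$. The change of unknown $S(r)=P(r)\tilde S(r)$ turns (\ref{eq:Sr rec1}) into $\tilde S(r)-\tilde S(rq)=-{\rm i}rq^{7/8}/P(r)$, which iterates (using $\tilde S(0)=S(0)=\mu(u,v)$) to
\[
\tilde S(r)=\mu(u,v)-{\rm i}q^{7/8}\sum_{k\ge 0}\frac{rq^k}{P(rq^k)}.
\]
Multiplying back by $P(r)$ and using $P(r)/P(rq^k)=(re^{\pi{\rm i}(u-v)}q, re^{-\pi{\rm i}(u-v)}q)_k$ rewrites the correction as $-{\rm i}rq^{7/8}\sum_{k\ge 0}(re^{\pi{\rm i}(u-v)}q, re^{-\pi{\rm i}(u-v)}q)_k q^k$, which is exactly the ${}_3\phi_2$ in the first closed form. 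Alternatively, expanding each factor $1/(xq^{k+1};q)_\infty$ with $x=re^{\pm\pi{\rm i}(u-v)}$ as a $q$-exponential, exchanging summations, summing the geometric series in $k$, and using $1/(1-q^{m+n+1})=(q)_{m+n}/((1-q)(q^2)_{m+n})$ recasts the same correction as $\frac{r}{1-q}\Phi^{(1)}(q;0,0;q^2;re^{\pi{\rm i}(u-v)}q, re^{-\pi{\rm i}(u-v)}q)$, giving (\ref{eq:Sr expression2}). Finally, grouping the resulting double sum by the total degree $N:=m+n$ identifies the inner sum with the continuous $q$-Hermite polynomial $H_N(\cos\pi(u-v)\mid q)$; Theorem~\ref{thm:mu and CqH} then converts $-{\rm i}q^{-1/8}H_{m-1}(\cos\pi(u-v)\mid q)$ into $\mu(u,v;1-m)$, producing (\ref{eq:Sr expression3}).

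\textbf{Main obstacle.} The only real subtlety lies in the careful accounting of the initial-condition constant. The three-term recurrence (\ref{eq:main results5}) produces a clean cancellation of every $r^k$ coefficient of $S(r)-A(r)S(rq)$ except at $k=1$, where the value $\mu(u,v;0)=-{\rm i}q^{-1/8}$ feeds the source term $-{\rm i}q^{7/8}$; that same constant then threads through the change of variable $S=P\tilde S$, the iteration of $\tilde S$, and each of the three $q$-hypergeometric identifications, so the power of $q$ at every stage must be tracked precisely to match the statements.
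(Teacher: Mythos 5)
Your proposal is correct and follows essentially the same route as the paper: the three-term recurrence (\ref{eq:main results5}) gives the inhomogeneous first-order equation (\ref{eq:Sr rec1}), which is then iterated (your substitution $S(r)=P(r)\tilde S(r)$ is the paper's $N$-fold iteration of (\ref{eq:Sr rec1}) in disguise) and converted to the ${}_3\phi_2$, $\Phi^{(1)}$ and continuous $q$-Hermite forms via Theorem~\ref{thm:mu and CqH}. The only cosmetic differences are that you spell out the derivation of (\ref{eq:Sr rec2}) from (\ref{eq:Sr rec1}) (which the paper declares immediate) and that you re-derive the required special case of Andrews' formula (\ref{eq:Andrews formula}) by a direct double-series manipulation instead of citing it.
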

In particular, note that the $q$-difference equation (\ref{eq:Sr rec2}) is a degeneration of the $q$-Heun equation:
\[
\big[\big(a_{0}+b_{0}x+c_{0}x^{2}\big)T_{x}^{2}+\big(a_{1}+b_{1}x+c_{1}x^{2}\big)T_{x}+\big(a_{2}+b_{2}x+c_{2}x^{2}\big)\big]u(x)=0,
\]
and $S(r)$ is its solution.
Furthermore, by comparing the coefficients in the expansion of the relation~(\ref{eq:Sr expression3}) at $r=0$, we also obtain some relations for $\mu(u,v;k+1)$, see Corollary~\ref{cor:Sr expressions}.

This paper is organized as follows.
First, in Section~\ref{section2}, we present some preliminary results on the basic solutions and connection formulas for some $q$-difference equations that are degenerations of the $q$-difference equation~(\ref{eq:q-Heine diff}). In particular, for the $q$-Hermite--Weber equation (\ref{eq:q-Hermite}), we give a proof of its connection formula according to Ohyama's private note~\cite{O2}.
Furthermore, we show that the images of the $q$-Borel and $q$-Laplace transformations for the divergent solution $\widetilde{f}_{0}(x)$ is essentially equivalent to the generalized $\mu$-function $\mu(u,v;\alpha)$.
Then, we prove the connection formula for the case where the parameters $\lambda $ and $\lambda^{\prime}$ are different, and state that it is equivalent to the translation formula (\ref{eq:main results3}) of $\mu(u,v;\alpha)$.
Next, in Section~\ref{section3}, we prove the above main results concerning $\mu(u,v;\alpha)$.
In Section~\ref{section4}, we mention that $\mu(u,v;k,\tau)$ satisfies the modular transformations (\ref{eq:tnu +1}) and (\ref{eq:tnu +tau}).
Finally, in Section~\ref{section5}, we discuss a possible direction to future works.

During the preparation of this paper, we have been informed by Hikami and Matsusaka of a~recent very interesting paper~\cite{GW}.
Some of the examples of the paper \cite{GW} overlap with our results on the $\mu$-function, though the one parameter generalization $\mu(u,v;\alpha)$ is not considered there (e.g., see equation (\ref{eq:mu q-diff}) and \cite[Section~4.2]{GW}).

\section[Fundamental solutions and connection formulas of some q-difference equations]{Fundamental solutions and connection formulas\\ of some $\boldsymbol{q}$-difference equations}\label{section2}

In this section, based on \cite{O1, RSZ,Zh}, we present some preliminary results on the basic solutions and connection formulas for some $q$-difference equations.
\begin{lem}[{\cite[p.~18, Theorem~2.2.1]{Zh}}]
\label{lem:lemma1}
We have the fundamental solutions of the $q$-difference equation
\begin{gather}
\label{eq:(26)}
\big[(1-abxq)T_x^2-(1-(a+b)xq)T_x-xq\big]f(x)=0
\end{gather}
around $x=0$ and $x= \infty$:
\begin{gather*}
 \mathcal{L}^{+}\circ\mathcal{B}^{+}(F_1)(x,\lambda), \qquad F_2(x)=\frac{(abx)_\infty}{\theta_q(-xq)}\,{}_2\phi_1\left(\begin{matrix}q/a,q/b\\0 \end{matrix};q,abx\right), \\
 G_1(x)
 =
 \frac{\theta_q(-axq)}{\theta_q(-xq)}\,{}_2\phi_1\left(\begin{matrix}a,0\\ aq/b\end{matrix};q,\frac{q}{abx}\right), \qquad G_2(x)=\frac{\theta_q(-bxq)}{\theta_q(-xq)}\,{}_2\phi_1\left(\begin{matrix}b,0\\ bq/a\end{matrix};q,\frac{q}{abx}\right),
\end{gather*}
where $F_{1}(x)$ is the formal solution around $x=0$:
\[
F_1(x)
 :=
 {}_2\phi_0\left(\begin{matrix}a,b \\ -\end{matrix}q,x\right).
\]

In this case, the connection formulas for $\mathcal{L}^{+}\circ\mathcal{B}^{+}(F_1)(x,\lambda)$, $F_2(x)$ and $G_1(x)$, $G_2(x)$ are as follows:
\begin{gather*}
\mathcal{L}^{+}\circ\mathcal{B}^{+}(F_1)(x,\lambda)
 =
 \frac{(b)_\infty\theta_q(a\lambda)\theta_q(axq/\lambda)\theta_q(-xq)}{(b/a)_\infty\theta_q(\lambda,xq/\lambda,-axq)}G_1(x)\\
\hphantom{\mathcal{L}^{+}\circ\mathcal{B}^{+}(F_1)(x,\lambda)=}{}
 +\frac{(a)_\infty\theta_q(b\lambda)\theta_q(bxq/\lambda)\theta_q(-xq)}{(a/b)_\infty\theta_q(\lambda)\theta_q(xq/\lambda)\theta_q(-bxq)}G_2(x),\\
F_2(x)
 =
 \frac{(q/a)_\infty}{(b/a,q)_\infty}G_1(x)+\frac{(q/b)_\infty}{(a/b,q)_\infty}G_2(x).
\end{gather*}
\end{lem}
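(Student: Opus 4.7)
The plan is to follow the Zhang--Ramis--Sauloy template for degenerate Laplace-type $q$-difference equations. The first step is to verify that $F_1,F_2,G_1,G_2$ all solve~(\ref{eq:(26)}). For $F_1={}_2\phi_0(a,b;-;q,x)$ this is a direct coefficient check: substituting into the operator and equating the coefficient of $x^n$ produces the three-term recursion that is satisfied by the coefficients $(a,b;q)_n((-1)^nq^{n(n-1)/2})^{-1}/(q;q)_n$. For $G_1$ and $G_2$, the pseudo-periodicity $\theta_q(qz)=z^{-1}\theta_q(z)$ expresses $T_xG_i$ as a scalar multiple of $G_i$ times a shift of the inner ${}_2\phi_1(\cdot\,;\cdot;q,q/(abx))$; the inner function satisfies Heine's $q$-difference equation in $q/(abx)$, and combining these contributions reproduces~(\ref{eq:(26)}). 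The solution $F_2$ is obtained analogously, or by observing that the substitution $u=(abx;q)_{\infty}f(x)/\theta_q(-xq)$ converts~(\ref{eq:(26)}) into Heine's equation for ${}_2\phi_1(q/a,q/b;0;q,abx)$.

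Next I would compute $\mathcal{L}^{+}\circ\mathcal{B}^{+}(F_1)(x,\lambda)$ explicitly. The Borel factor $q^{n(n-1)/2}$ cancels the inverse Gaussian in the ${}_2\phi_0$ coefficients, yielding the convergent series
\[
\mathcal{B}^{+}(F_1)(\xi)=\sum_{n\geq 0}\frac{(a,b;q)_n}{(q;q)_n}(-\xi)^{n},
\]
which admits a meromorphic continuation via Heine's transformation. Feeding this into $\mathcal{L}^{+}$ yields the bilateral sum $\sum_{n\in\Zz}\mathcal{B}^{+}(F_1)(\lambda q^n)/\theta_q(\lambda q^n/x)$, and a direct check using $\theta_q(qz)=z^{-1}\theta_q(z)$ confirms that this sum solves~(\ref{eq:(26)}).

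The connection formula is then proved by the pseudo-constant argument. Because~(\ref{eq:(26)}) is second order, its solution space over the field $\mathcal{K}_q$ of $q$-invariant meromorphic functions is two dimensional; via $x\mapsto \log x/\log q$ this field is identified with meromorphic functions on the elliptic curve $\Cz^{\times}/q^{\Zz}$. Since $\{G_1,G_2\}$ is a basis, both $\mathcal{L}^{+}\circ\mathcal{B}^{+}(F_1)(x,\lambda)$ and $F_2(x)$ admit unique $\mathcal{K}_q$-linear expansions of the form $\Phi_1(x,\lambda)G_1(x)+\Phi_2(x,\lambda)G_2(x)$. The elliptic coefficients $\Phi_i$ are pinned down by matching residues at the poles $x=-q^{-k}$ coming from $1/\theta_q(-xq)$: on the left the residue is produced by the factor $1/\theta_q(\lambda q^n/x)$ for the appropriate $n$, while on the right it comes from the theta ratio appearing in $G_i$. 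For the simpler decomposition of $F_2$, the inner ${}_2\phi_1(q/a,q/b;0;q,abx)$ at $x=-q^{-k}$ collapses through Ramanujan's ${}_1\psi_1$ summation to an explicit product of $q$-shifted factorials, producing the numerical coefficients $(q/a;q)_{\infty}/(b/a,q;q)_{\infty}$ and $(q/b;q)_{\infty}/(a/b,q;q)_{\infty}$.

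The main obstacle is the theta-function bookkeeping: one must simultaneously track the pseudo-periodicity of $\theta_q(\lambda q^n/x)$ in both $\lambda$ and $x$ and of $\theta_q(-axq),\theta_q(-bxq),\theta_q(-xq)$ in $x$, and verify that the stated quotients of theta functions have precisely the right principal parts at every element of the pole divisor. Once this delicate matching is arranged at a single fundamental point, the Liouville-type rigidity of meromorphic functions on $\Cz^{\times}/q^{\Zz}$ propagates the identity to the whole curve and yields the stated connection formulas.
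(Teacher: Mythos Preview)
The paper does not actually prove Lemma~\ref{lem:lemma1}: it is quoted directly from Zhang~\cite{Zh} (see the citation in the lemma header) and used as a black box, the paper's own contribution being the degeneration to Lemma~\ref{lem:lemma2} following Ohyama's unpublished note. So there is no proof in the paper to compare your proposal against.

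That said, your outline is a faithful reconstruction of the standard Zhang--Ramis--Sauloy strategy and is architecturally sound: verify each candidate solves~(\ref{eq:(26)}), invoke the two-dimensionality of the solution space over the field of $q$-elliptic functions on $\Cz^{\times}/q^{\Zz}$, and pin down the elliptic connection coefficients by a Liouville-type argument. One concrete slip: the poles coming from $1/\theta_q(-xq)$ lie at $x\in q^{\Zz}$, not at $x=-q^{-k}$, since $\theta_q(-xq)=(q,xq,1/x;q)_{\infty}$; and the poles of $\mathcal{L}^{+}\circ\mathcal{B}^{+}(F_1)(x,\lambda)$ in $x$, arising from $\theta_q(\lambda q^n/x)$, sit on $-\lambda q^{\Zz}$. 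The mismatch between these two lattices is precisely why the connection coefficients carry the factors $\theta_q(-xq)$ in the numerator (cancelling the $G_i$ poles) and $\theta_q(xq/\lambda)$ in the denominator (supplying the $-\lambda q^{\Zz}$ poles). Your residue-matching step should therefore be executed at the $-\lambda q^{\Zz}$ lattice for the first formula. For the $F_2$ formula, the invocation of Ramanujan's ${}_1\psi_1$ is a detour; the identity is more directly a specialization of the classical three-term connection relation for ${}_2\phi_1$ (cf.\ \cite[Section~4.3]{GR}). With these adjustments the plan is correct.
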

Putting $x\mapsto a^{-1}x$, $b=0$ in the $q$-difference equation $(\ref{eq:(26)})$, we have
\[
\left[T_x^2-(1-xq)T_x-\frac{x}{a}q\right]f\big(a^{-1}x\big)=0.
\]
Furthermore, we put $a=q^{\alpha }$, $x^\frac{\alpha}{2}f\big(a^{-1}x\big)=g(x)$, then $g(x)$ is the solution of the $q$-Hermite--Weber equation (\ref{eq:q-Hermite}).
\begin{lem}\label{lem:lemma2}
The fundamental solutions of the $q$-difference equation \eqref{eq:q-Hermite} around $x=0$ are
\begin{gather*}
f_{0}(x)
 :=
 x^{\frac{\alpha }{2}}\mathcal{L}^{+}\circ\mathcal{B}^{+}\big(\widetilde{f}_{0}\big)(x,\lambda), \qquad
g_0(x)
 :=
 \frac{x^{1-\frac{\alpha}{2}}}{\theta_q(-x)}\,{}_1\phi_1\left(\begin{matrix}q/a\\0 \end{matrix};q,xq\right),
\end{gather*}
where $\widetilde{f}_{0}(x)$ is the formal solution around $x=0$:
\[
\widetilde{f}_{0}(x)
 =
 {}_2\phi_0\left(\begin{matrix}a,0 \\ - \end{matrix};q, {\frac{x}{a}}\right).
\]
And that of around $x=\infty$ are
\begin{gather*}
f_{\infty }(x)
 :=
 f_{0 }\big(x^{-1}\big), \qquad
g_{\infty }(x)
 =g_0\big(x^{-1}\big).
\end{gather*}

In this case, the connection formulas for $f_0(x,\lambda)$, $f_{\infty }(x,\lambda)$ and $g_0(x)$, $g_\infty(x)$ are as follows:
\begin{align}
\label{amuconnect}
\begin{pmatrix}f_0(x,\lambda)\\ f_{\infty }(x,\lambda) \end{pmatrix}=-\frac{(q)_\infty}{(q/a)_\infty}\begin{pmatrix}\dfrac{\theta_q(\lambda)\theta_q(ax/\lambda)x^\alpha}{\theta_q(\lambda/a)\theta_q(x/\lambda)a}&1 \\ 1&\dfrac{\theta_q(\lambda)\theta_q(x\lambda/a)}{\theta_q(\lambda/a)\theta_q(x\lambda)}x^{-\alpha}\end{pmatrix}\begin{pmatrix}g_0(x)\\g_\infty(x)\end{pmatrix}.
\end{align}
\end{lem}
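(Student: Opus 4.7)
The plan is to derive Lemma \ref{lem:lemma2} as a confluent degeneration of Lemma \ref{lem:lemma1}, following the recipe in the paragraph just above the statement: substitute $x \mapsto a^{-1}x$ and send $b \to 0$ in equation (\ref{eq:(26)}), set $a = q^\alpha$, and rescale solutions via $g(x) = x^{\alpha/2} f(a^{-1}x)$. A direct check with the shift operator $T_x$ confirms that these transformations send (\ref{eq:(26)}) to the $q$-Hermite--Weber equation (\ref{eq:q-Hermite}), so each fundamental solution in Lemma \ref{lem:lemma1} produces, in the limit, a solution of (\ref{eq:q-Hermite}).

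For the solutions at $x=0$: under $b \to 0$ the formal series $F_1$ reduces precisely to $\widetilde{f}_0$ after the variable change, so applying $\mathcal{L}^{+}\circ\mathcal{B}^{+}$ and multiplying by $x^{\alpha/2}$ yields $f_0$ by definition. For $F_2$, the key input is $(q/b)_n (abx)^n \to (-1)^n q^{n(n+1)/2}(ax)^n$ as $b \to 0$, which collapses the inner ${}_2\phi_1$ into a ${}_1\phi_1$; the change of variable, the rescaling by $x^{\alpha/2}$, and the identity $\theta_q(qx) = x^{-1}\theta_q(x)$ then recover $g_0$ up to an overall scalar.

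For the solutions at $x=\infty$, rather than taking limits of $G_1, G_2$ (which depend on $b$ through the parameters $aq/b, bq/a$ and degenerate badly as $b \to 0$), we would exploit the symmetry $x \mapsto x^{-1}$ of the $q$-Hermite--Weber equation: a short computation with $T_x h(x^{-1}) = h(q^{-1}x^{-1})$ shows that the substitution $h(x) \mapsto h(x^{-1})$ preserves (\ref{eq:q-Hermite}), so $f_\infty(x) := f_0(x^{-1})$ and $g_\infty(x) := g_0(x^{-1})$ are automatically fundamental solutions at $x=\infty$.

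For the connection formula (\ref{amuconnect}), we would pass from the $(G_1,G_2)$-basis to the $(g_0,g_\infty)$-basis \emph{before} taking $b \to 0$: although $G_1, G_2$ individually diverge, the combinations producing $g_0$ (via $F_2$) and $g_\infty$ (via the symmetry $x \mapsto x^{-1}$) remain finite in the limit. Substituting the inverse of the $F_2$-to-$(G_1,G_2)$ relation into the $\mathcal{L}^{+}\circ\mathcal{B}^{+}(F_1)$-to-$(G_1,G_2)$ formula of Lemma \ref{lem:lemma1}, and simplifying with $\theta_q(qx) = x^{-1}\theta_q(x)$ and $\theta_q(1/x) = x^{-1}\theta_q(x)$, produces the top row of (\ref{amuconnect}); the bottom row then follows by applying $x \mapsto x^{-1}$. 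The main obstacle is controlling this confluence, since the degeneracy is invisible in the $(g_0,g_\infty)$-basis but severe in the $(G_1,G_2)$-basis; an equivalent, more self-contained route is to pin down the matrix entries directly by elliptic-function arguments, each entry being a ratio of theta functions determined up to a scalar by its zeros, poles and pseudo-periodicity in $x$, with the scalar fixed by one asymptotic match as $x \to 0$ or $x \to \infty$.
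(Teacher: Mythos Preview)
Your overall strategy matches the paper's: degenerate Lemma~\ref{lem:lemma1} via $x\mapsto a^{-1}x$, $b\to 0$, rescale by $x^{\alpha/2}$, and use the $x\leftrightarrow x^{-1}$ symmetry of (\ref{eq:q-Hermite}) for the solutions at infinity. Your treatment of $f_0$, $g_0$, $f_\infty$, $g_\infty$ is correct and agrees with the paper.

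The gap is in the connection formula. You propose to pass to a ``$(g_0,g_\infty)$-basis before taking $b\to 0$'', with the $g_\infty$-combination coming ``via the symmetry $x\mapsto x^{-1}$''. But equation (\ref{eq:(26)}) is \emph{not} invariant under $x\mapsto x^{-1}$ before the confluence, so there is no natural pre-limit avatar of $g_\infty$ obtained that way; the combination you would need is not $F_2(x^{-1})$ or anything built from the symmetry. The paper's fix is different and concrete: apply Heine's transformation to rewrite $G_2$ itself as
\[
G_2(x)=(q,abx)_\infty\,\frac{\theta_q(-bxq)}{\theta_q(-abx)\theta_q(-xq)}\,{}_1\phi_1\!\left(\begin{matrix}q/a\\ bq/a\end{matrix};q,\frac{q}{ax}\right),
\]
whose ${}_1\phi_1$ part manifestly limits to the series in $g_\infty$; then express the connection in the basis $(F_2,G_2)$ (eliminating $G_1$ using the second formula of Lemma~\ref{lem:lemma1}) and take the limit not as $b\to 0$ generically but along the specific sequence $b=-\lambda^{-1}q^n$, $n\to\infty$ (after $x\mapsto a^{-1}x$, $\lambda\mapsto a^{-1}\lambda$), so that the several $\theta_q(\,\cdot\,)$ factors involving $b$ balance correctly rather than diverge. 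Without Heine's rewriting and this particular limiting direction, your confluence does not go through as stated. Your fallback via elliptic-function arguments is a legitimate alternative in principle (and is exactly how the paper handles the related Theorem~\ref{thm:tuchimi connection}), but you have not carried it out.
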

For this Lemma\,\ref{lem:lemma2}, we give a proof by following Ohyama \cite{O2}.
\begin{proof}
It is sufficient to prove the case of $x=0$, since the $q$-difference equation (\ref{eq:q-Hermite}) is symmetric under the transformation $x\leftrightarrow x^{-1}$.

The image of $\mathcal{L}^{+}\circ\mathcal{B}^{+}$ for $\widetilde{f}_{0}(x)$ gives the convergent series $f_{0}(x)$, and it gives a fundamental solution of (\ref{eq:q-Hermite}).
This fact follows from the property of the $q$-Borel and $q$-Laplace transformations:
\begin{gather*}
\mathcal{B}^{+}\big(x^mT_x^nf(x)\big)(\xi)=q^\frac{m(m-1)}{2}\xi^mT_\xi^{m+n}\mathcal{B}^+(f)(\xi), \\
\mathcal{L}^{+}\big(\xi^mT_\xi^nf(\xi)\big)(x,\lambda)=q^{-\frac{m(m-1)}{2}}x^mT_x^{n-m}\mathcal{L}^+(f)(x,\lambda),
\end{gather*}
or
\begin{gather*}
\mathcal{L}^{+}\circ\mathcal{B}^{+}\big(x^mT_x^nf\big)(x,\lambda)=x^mT_x^n\mathcal{L}^+\circ\mathcal{B}^+(f)(x,\lambda).
\end{gather*}
Since the limit of $F_{2}(x)$ as $x\to a^{-1}x$, $b\to 0$ degenerates to $g_{0}(x)$, we prove $g_{0}(x)$ is another fundamental solution of~(\ref{eq:q-Hermite}).

The proof of the connection formula is obtained from the degeneration limit of the connection formula in Lemma~\ref{lem:lemma1}.
We take the limit of the Heine's transformation formula
\[
{}_2\phi_1\left(\begin{matrix}a,b\\c\end{matrix};q,x\right)
 =
 \frac{(abx/c)_\infty}{(x)_\infty}\,{}_2\phi_1\left(\begin{matrix}c/b,c/a\\ c\end{matrix};q,\frac{abx}{c}\right),
\]
as $a\mapsto0$, then
\[
{}_2\phi_1\left(\begin{matrix}0,b\\c\end{matrix};q,x\right)=\frac{1}{(x)_\infty}\, {}_1\phi_1\left(\begin{matrix}q/b\\c\end{matrix};q,bx\right).
\]
Then we rewrite the solution $G_2(x)$ in Lemma~\ref{lem:lemma1} as
\begin{align*}
G_2(x)
 &=
 \frac{\theta_q(-bxq)}{\theta_q(-xq)}\,{}_2\phi_1\left(\begin{matrix}b,0\\ bq/a\end{matrix};q,\frac{q}{abx}\right)
 =
 \frac{\theta_q(-bxq)}{(q/abx)_\infty \theta_q(-xq)}\,{}_1\phi_1\left(\begin{matrix}q/a\\ bq/a\end{matrix};q,\frac{q}{ax}\right) \\
 &=
 (q,abx)_\infty \frac{\theta_q(-bxq)}{\theta_q(-abx)\theta_q(-xq)}\,{}_1\phi_1\left(\begin{matrix}q/a\\ bq/a\end{matrix};q,\frac{q}{ax}\right).
\end{align*}
Therefore the connection formula of $\mathcal{L}^+\circ\mathcal{B}^+(F_1)(x,\lambda)$ is given by
\begin{gather}
\label{eq:prot connec}
\mathcal{L}^+\circ\mathcal{B}^+(F_1)(x,\lambda)=C_1(x)F_2(x)+C_2(x)\frac{(abx)_\infty}{\theta_q(-ax)}\,{}_1\phi_1\left(\begin{matrix}q/a\\ bq/a\end{matrix};q,\frac{q}{ax}\right),
\end{gather}
where
\begin{gather*}
C_1(x)
 =
 \frac{(b,q)_\infty}{(q/a)_\infty}\frac{\theta_q(a\lambda)\theta_q(axq/\lambda)\theta_q(-xq)}{\theta_q(\lambda)\theta_q(xq/\lambda)\theta_q(-axq)}, \\
C_2(x)
 =
 \frac{(q)_\infty\theta_q(-ax)}{\theta_q(\lambda)\theta_q(\frac{xq}{\lambda})}\left\{ (a,bq/a,q)_\infty\frac{\theta_q(b\lambda)\theta_q(bxq/\lambda)}{\theta_q(-bq/a)\theta_q(-abx)}\right.\\
\left.\hphantom{C_2(x)=}{}
-\frac{(bq/a)_\infty}{(q/a)_\infty}\frac{\theta_q(-b)\theta_q(a\lambda)\theta_q(axq/\lambda)\theta_q(-bxq)}{\theta_q(-bq/a)\theta_q(-axq)\theta_q(-abx)}\right\}.
\end{gather*}

By replacing $x\mapsto a^{-1}x$, $\lambda\mapsto a^{-1}\lambda$, $b\mapsto-\lambda^{-1}q^n$ in (\ref{eq:prot connec}) and taking the limit of each term of~(\ref{eq:prot connec}) as $n\to\infty$, we have
\begin{gather*}
 \frac{(abx)_\infty}{\theta_q(-ax)}\,{}_1\phi_1\left(\begin{matrix}q/a\\ bq/a\end{matrix};q,\frac{q}{ax}\right)\to-x^{-\frac{\alpha}{2}}g_{\infty }(x), \\
 F_2(x)\to\frac{\theta_q(-x)}{\theta_q(-xq/a)}x^{\frac{\alpha}{2}-1}g_0(x), \qquad
 \mathcal{L}^{+}\circ\mathcal{B}^{+}(F_1)(x,\lambda)\to \mathcal{L}^{+}\circ\mathcal{B}^{+}\big(\widetilde{f}_{0}\big)(x,\lambda).
\end{gather*}
Hence, we obtain the conclusion.
\end{proof}

Next, we prove Theorem \ref{thm:mu and q-Hermite--Weber} which means that the image of the composition of the $q$-Borel and $q$-Laplace transformations of the divergent series $\widetilde{f}_{0}(x)$ is essentially equivalent to our $\mu (u,v;\alpha)$.

\begin{proof}[Proof of Theorem \ref{thm:mu and q-Hermite--Weber}]
From the definition of the $q$-Borel transformation and the $q$-binomial theorem (see \cite[p.~8, equation~(1.3.2)]{GR}),
\begin{align*}
\mathcal{B}^{+}\left(\widetilde{f}_{0}\right)(\xi )
 &=
 \sum_{n\geq 0}\frac{(a)_{n}}{(q)_{n}}(-1)^{n}\left(\frac{\xi }{a}\right)^{n}
 =
 \frac{(-\xi )_{\infty }}{(-\xi /a)_{\infty }}.
\end{align*}
By a simple calculation, we have
\begin{align*}
\mathcal{L}^{+}\circ\mathcal{B}^{+}\big(\widetilde{f}_{0}\big)(x,-\lambda)
 &=
 \sum_{n\in\Zz}\frac{1}{\theta_q(-\lambda q^n/x)}\frac{(\lambda q^n)_\infty}{(\lambda q^n/a)_\infty} \\
 &=
 \frac{1}{\theta_q(-\lambda/x)}\sum_{n\in\Zz}\left(-\frac{\lambda}{x}\right)^{n+1}q^\frac{n(n+1)}{2}
 \frac{\big(\lambda q^{n+1}\big)_{\infty }}{\big(\lambda q^{n-\alpha +1}\big)_{\infty }}.
\end{align*}
The second equality follows from the well-known $q$-difference relation of the theta function:
\[
\theta _{q}(q^{n}x)=q^{-\frac{n(n-1)}{2}}x^{-n}\theta _{q}(x), \qquad n \in \mathbb{Z}.
\]
By substituting $x={\rm e}^{2\pi{\rm i}(u-v)}$ and $\lambda={\rm e}^{2\pi{\rm i}u}$, we have
\begin{align*}
f_0\big({\rm e}^{2\pi{\rm i}(u-v)},-{\rm e}^{2\pi{\rm i}u}\big)
 &=
 {\rm e}^{\pi{\rm i}\alpha(u-v)}\mathcal{L}^{+}\circ\mathcal{B}^{+}\big(\widetilde{f}_{0}\big)\big({\rm e}^{2\pi{\rm i}(u-v)},-{\rm e}^{2\pi{\rm i}v}\big) \\
 &=
 -\frac{{\rm e}^{\pi{\rm i}\alpha(u-v)}}{\theta_q\big({-}{\rm e}^{2\pi{\rm i}v}\big)}\sum_{n\in\Zz}(-1)^n{\rm e}^{2\pi{\rm i}(n+1)v}q^\frac{n(n+1)}{2}\frac{\big({\rm e}^{2\pi{\rm i}u}q^{n+1}\big)_\infty}{\big({\rm e}^{2\pi{\rm i}u}q^{n-\alpha+1}\big)_\infty} \\
 &=
 \frac{{\rm i}{\rm e}^{\pi{\rm i}\alpha(u-v)}}{\vartheta_{11}(v)}\sum_{n\in\Zz}(-1)^n{\rm e}^{2\pi{\rm i}\left(n+\frac{1}{2}\right)v}q^\frac{\left(n+\frac{1}{2}\right)^2}{2}\frac{\big({\rm e}^{2\pi{\rm i}u}q^{n+1}\big)_\infty}{\big({\rm e}^{2\pi{\rm i}u}q^{n-\alpha+1}\big)_\infty} \\
 &=
 {\rm i}q^\frac{1}{8}\mu(u,v;\alpha).\tag*{\qed}
\end{align*}\renewcommand{\qed}{}
\end{proof}

Finally, we give a connection formula which derives the translation formula (\ref{eq:main results3}).
\begin{thm}
The following equation holds:
\label{thm:tuchimi connection}
\begin{gather}
f_{0}(x,\lambda^{\prime})
 =
 \frac{\theta_q(\lambda^{\prime})\theta_q(ax/\lambda^{\prime})x^\alpha}{\theta_q(\lambda^{\prime}/a)\theta_q(x/\lambda^{\prime})a}f_{\infty}(x,\lambda) \nonumber \\
\label{eq:tuchimi connection}
\hphantom{f_{0}(x,\lambda^{\prime})=}{}
 -
 (a)_\infty(q)_\infty^2\frac{\theta_q(-\lambda^{\prime}/x\lambda)\theta_q(-x)\theta_q(-\lambda\lambda^{\prime}/a)}{\theta_q(x\lambda)\theta_q(\lambda^{\prime}/x)\theta_q(\lambda^{\prime}/a)\theta_q(a/\lambda)}g_{\infty }(x).
\end{gather}
\end{thm}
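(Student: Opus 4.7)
The plan is to derive the identity from Lemma~\ref{lem:lemma2} by applying it twice with different values of the auxiliary $q$-Laplace parameter, then eliminating one of the unknowns and simplifying with a theta-function identity. Concretely, Lemma~\ref{lem:lemma2} with parameter $\lambda'$ expresses $f_0(x,\lambda')$ as a linear combination of $g_0(x)$ and $g_\infty(x)$, while Lemma~\ref{lem:lemma2} with parameter $\lambda$ expresses $f_\infty(x,\lambda)$ as a (different) linear combination of the same basis. Solving the second relation for $g_0(x)$ and substituting into the first yields an equation of the form
\[
f_0(x,\lambda')
=
\frac{\theta_q(\lambda')\theta_q(ax/\lambda')x^\alpha}{\theta_q(\lambda'/a)\theta_q(x/\lambda')\,a}\,f_\infty(x,\lambda)
+ C(x;\lambda,\lambda';a)\,g_\infty(x),
\]
in which the coefficient of $f_\infty(x,\lambda)$ already matches the one on the right-hand side of \eqref{eq:tuchimi connection}.

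The remaining task is to identify $C(x;\lambda,\lambda';a)$ with the stated quotient of theta functions. Straightforward substitution expresses $C$ as the prefactor $(q)_\infty/(q/a)_\infty$ times
\[
\frac{\theta_q(\lambda')\theta_q(ax/\lambda')\theta_q(\lambda)\theta_q(x\lambda/a)\;-\;a\,\theta_q(\lambda'/a)\theta_q(x/\lambda')\theta_q(\lambda/a)\theta_q(x\lambda)}{a\,\theta_q(\lambda'/a)\theta_q(x/\lambda')\theta_q(\lambda/a)\theta_q(x\lambda)}.
\]
The key point is that the numerator is a difference of two four-fold products of theta functions, which is exactly the situation handled by the Weierstrass (equivalently Riemann) three-term theta identity; under a suitable matching of parameters it collapses into a single product, which I expect to be $\theta_q(-a)\,\theta_q(-\lambda'/(x\lambda))\,\theta_q(-x)\,\theta_q(-\lambda\lambda'/a)$ up to an explicit monomial in $a,x,\lambda,\lambda'$. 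Invoking the factorization $\theta_q(-a)=(q)_\infty(a)_\infty(q/a)_\infty$ then converts the prefactor $(q)_\infty/(q/a)_\infty$ into $(a)_\infty(q)_\infty^2$, and the residual monomial is absorbed by the symmetry $\theta_q(y)=\theta_q(q/y)$ applied to $\theta_q(x/\lambda')$, recovering the denominator in the claim.

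The main obstacle is the application of the three-term theta identity: one must recognize the eight theta factors appearing in the numerator of $C$ as precisely the pattern to which the Weierstrass relation applies, and keep careful track of the extra monomial produced by it. Once this step is made, what remains is routine bookkeeping. As an independent consistency check, both sides of~\eqref{eq:tuchimi connection} solve the $q$-Hermite--Weber equation~\eqref{eq:q-Hermite} in $x$, transform the same way under $\lambda\mapsto q\lambda$ and $\lambda'\mapsto q\lambda'$, and collapse to Lemma~\ref{lem:lemma2} in the specialization $\lambda'=\lambda$; these three properties essentially pin down the coefficient of $g_\infty(x)$ up to a constant, giving an alternative route to finish the verification.
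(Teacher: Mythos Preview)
Your overall structure---apply Lemma~\ref{lem:lemma2} once with $\lambda'$ and once with $\lambda$, eliminate $g_0(x)$, and then identify the resulting coefficient of $g_\infty(x)$---is exactly what the paper does, and your expression for $C$ as $(q)_\infty/(q/a)_\infty$ times a difference of four-fold theta products over a four-fold theta product is precisely the intermediate form the paper arrives at. The only difference lies in how the last step is carried out.

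The paper does not invoke the Weierstrass three-term relation by name. Instead, it rewrites $C$ as an explicit theta quotient times a factor $C(x)$, observes that $C(x)$ is $q$-periodic in $x$ with at most simple poles on $\{q^j,\,(\lambda'/\lambda)q^j\}_{j\in\Zz}$, checks by direct computation that the residue at $x=1$ vanishes, concludes that $C(x)$ is constant in $x$, and then evaluates that constant at the convenient point $x=-a/\lambda$. This is of course one of the standard proofs of the three-term theta relation itself, so the two routes are close cousins: the paper's argument is self-contained and sidesteps the bookkeeping of matching eight theta arguments to the general identity, while your approach is shorter provided the reader already has the identity in the right normalization. Your ``consistency check'' at the end (periodicity in $\lambda,\lambda'$ plus a single specialization to pin down the constant) is in fact closer in spirit to the paper's actual argument than your primary appeal to the three-term relation.
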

\begin{proof}
We replace the variable $\lambda$ by $\lambda'$ in the connection formula~(\ref{amuconnect}) of the case of $f_0(x,\lambda)$:
\begin{gather}
\label{eq:different variable}
\begin{pmatrix}f_0(x,\lambda')\\ f_{\infty }(x,\lambda) \end{pmatrix}=-\frac{(q)_\infty}{(q/a)_\infty}\begin{pmatrix}\dfrac{\theta_q(\lambda')\theta_q(ax/\lambda')x^\alpha}{\theta_q(\lambda'/a)\theta_q(x/\lambda')a}&1\\ 1&\dfrac{\theta_q(\lambda)\theta_q(x\lambda/a)}{\theta_q(\lambda/a)\theta_q(x\lambda)}x^{-\alpha}\end{pmatrix}\begin{pmatrix}g_0(x)\\g_\infty(x)\end{pmatrix}.
\end{gather}
By erasing the function $g_0(x)$ in the above (\ref{eq:different variable}), we have
\begin{gather*}
f_0(x,\lambda^{\prime})-\frac{\theta_q(\lambda^{\prime})\theta_q(ax/\lambda^{\prime})x^\alpha}{\theta_q(\lambda^{\prime}/a)\theta_q(x/\lambda^{\prime})a}f_{\infty}(x,\lambda)\\
\qquad=\frac{(q)_\infty}{(q/a)_\infty}\left\{\frac{\theta_q(\lambda)\theta_q(\lambda^{\prime})\theta_q(x\lambda/a)\theta_q(ax/\lambda^{\prime})}{a\theta_q(\lambda/a)\theta_q(\lambda^{\prime}/a)\theta_q(x\lambda)\theta_q(x/\lambda^{\prime})}-1\right\}g_\infty(x)\\
\qquad =\frac{(a)_\infty(q)_\infty^2}{\theta_q(-a)}\frac{\theta_q(-\lambda^{\prime}/x\lambda)\theta_q(-x)}{\theta_q(x\lambda)\theta_q(\lambda^{\prime}/x)}C(x)g_\infty(x),
\end{gather*}
where
\[
C(x)=\frac{\lambda^{\prime}}{ax}\frac{\theta_q(\lambda)\theta_q(\lambda^{\prime})\theta_q(x\lambda/a)\theta_q(ax/\lambda^{\prime})} {\theta_q(-\lambda^{\prime}/x\lambda)\theta_q(-x)\theta_q(\lambda/a)\theta_q(\lambda^{\prime}/a)}-\frac{\theta_q(x\lambda) \theta_q(\lambda^{\prime}/x)}{\theta_q(-\lambda^{\prime}/x\lambda)\theta_q(-x)}.
\]
The function $C(x)$ has a simple pole in the points $x=q^j,\frac{\lambda^{\prime}}{\lambda}q^j$ for some $j\in\Zz$, and satisfies $C(xq)=C(x)$. We calculate the residue at the pole $x=1$,
\begin{align*}
\lim_{x\to1}\theta_q(-x)C(x)&=\frac{\lambda^{\prime}}{a}\frac{\theta_q(\lambda)\theta_q (\lambda^{\prime})\theta_q(\lambda/a)\theta_q(a/\lambda^{\prime})}{\theta_q(-\lambda^{\prime}/\lambda)\theta_q(\lambda/a)\theta_q(\lambda^{\prime}/a)} -\frac{\theta_q(\lambda)\theta_q(\lambda^{\prime})}{\theta_q(-\lambda^{\prime}/\lambda)}\\
&=\frac{\theta_q(\lambda)\theta_q(\lambda^{\prime})}{\theta_q(-\lambda^{\prime}/\lambda)}\left\{\frac{\lambda^{\prime}}{a} \frac{\theta_q(a/\lambda^{\prime})}{\theta_q(\lambda^{\prime}/a)}-1\right\}=0.
\end{align*}
It follows that the doubly periodic function $C(x)$ is constant in $x$.
Hence, we have
\begin{gather*}
C(x)=C(-a/\lambda)=-\frac{\theta_q(-a)\theta_q(-\lambda\lambda^{\prime}/a)}{\theta_q(\lambda^{\prime}/a)\theta_q(a/\lambda)}.\tag*{\qed}
\end{gather*}\renewcommand{\qed}{}
\end{proof}

\section{Proofs of the main results}\label{section3}
In this section, we prove the main theorems and corollaries.
First, we prove Theorem~\ref{thm:Thm1}.

\begin{proof}[Proof of Theorem \ref{thm:Thm1}]
The equation (\ref{eq:main results0}) is obvious from Theorem \ref{thm:mu and q-Hermite--Weber} and Lemma \ref{lem:lemma2}.

The pseudo periodicity (\ref{eq:main results1}) follows directly from the definition of $\mu(u,v;\alpha)$.

The forward shift (\ref{eq:main results2}) is proved as follows:
\begin{gather*}
\mu(u+\tau,v;\alpha)
 =
 \frac{{\rm e}^{\pi{\rm i}\alpha(u-v)}}{\vartheta_{11}(v)}q^\frac{\alpha}{2}\sum_{n\in\Zz}
 (-1)^n{\rm e}^{2\pi{\rm i}(n+\frac{1}{2})v}q^\frac{n(n+1)}{2} \\
\hphantom{\mu(u+\tau,v;\alpha)=}{}\times
 \frac{({\rm e}^{2\pi{\rm i}u}q^{n+2})_{\infty }}{({\rm e}^{2\pi{\rm i}u}q^{n-\alpha+2})_{\infty }}\big({\rm e}^{2\pi{\rm i}u}q^{n+1}+1-{\rm e}^{2\pi{\rm i}u}q^{n+1}\big) \\
\hphantom{\mu(u+\tau,v;\alpha)}{}=
 \frac{{\rm e}^{\pi{\rm i}\alpha(u-v)}}{\vartheta_{11}(v)}q^\frac{\alpha}{2}
 \left\{
 \sum_{n\in\Zz}{\rm e}^{2\pi{\rm i}(u-v)}(-1)^{n-1}{\rm e}^{2\pi{\rm i}(n+\frac{1}{2})v}q^\frac{n(n+1)}{2}
 \frac{({\rm e}^{2\pi{\rm i}u}q^{n+1})_{\infty }}{({\rm e}^{2\pi{\rm i}u}q^{n-\alpha+1})_{\infty }} \right. \\
 \left.
 \hphantom{\mu(u+\tau,v;\alpha)=\frac{{\rm e}^{\pi{\rm i}\alpha(u-v)}}{\vartheta_{11}(v)}q^\frac{\alpha}{2}}{}
 +
 \sum_{n\in\Zz}(-1)^n{\rm e}^{2\pi{\rm i}(n+\frac{1}{2})v}q^\frac{n(n+1)}{2}
 \frac{\big({\rm e}^{2\pi{\rm i}u}q^{n+1}\big)_{\infty }}{\big({\rm e}^{2\pi{\rm i}u}q^{n-(\alpha -1)+1}\big)_{\infty }}\right\} \\
 \hphantom{\mu(u+\tau,v;\alpha)}{}
 =
 -{\rm e}^{2\pi{\rm i}(u-v)}q^\frac{\alpha}{2}\mu(u,v;\alpha)+{\rm e}^{\pi{\rm i}(u-v)}q^\frac{\alpha}{2}\mu(u,v;\alpha-1).
\end{gather*}

The translation formula (\ref{eq:main results3}) is proved by putting $x={\rm e}^{2\pi{\rm i}(u-v)}$, $\lambda=-{\rm e}^{2\pi{\rm i}v}$, and $\lambda^{\prime}=-{\rm e}^{2\pi{\rm i}(u+z)}$ in the connection formula~(\ref{eq:tuchimi connection}) and using~(\ref{eq:mu and q-Hermite--Weber}).
The $\tau$-periodicity (\ref{eq:main results4}), symmetry (\ref{eq:main results4-2}) and pseudo-periodicity (\ref{eq:main results4-3}) are the case of $z=0$, $\tau$ and $-u-v+\alpha\tau$ in the translation formula (\ref{eq:main results3}), respectively,
\begin{gather*}
\mu(u,v;\alpha)
 =
 \frac{\vartheta_{11}(v-\alpha\tau)\vartheta_{11}(u)}{\vartheta_{11}(u-\alpha\tau)\vartheta_{11}(v)}{\rm e}^{2\pi{\rm i}\alpha(u-v)}\mu(v,u;\alpha), \\
\mu(u,v;\alpha)
 =
 \mu(u+\tau,v+\tau;\alpha), \\
\mu(u,v;\alpha)
 =
 \frac{\vartheta_{11}(v-\alpha\tau)\vartheta_{11}(u)}{\vartheta_{11}(u-\alpha\tau)\vartheta_{11}(v)}{\rm e}^{2\pi{\rm i}\alpha(u-v)}\mu(-u+\alpha\tau,-v+\alpha\tau;\alpha).
\end{gather*}

Finally to prove the equation (\ref{eq:main results5}), let us put
\[
\Phi(u,v;\alpha):=\frac{\vartheta_{11}(v-\alpha\tau)\vartheta_{11}(u)}{\vartheta_{11}(u-\alpha\tau)\vartheta_{11}(v)}{\rm e}^{2\pi{\rm i}\alpha(u-v)},
\]
which satisfies
\[
\Phi(u,v;\alpha)=\Phi(u,v;\alpha-1)=\Phi(u,v+\tau;\alpha)=\Phi(v,u;\alpha)^{-1}.
\]
Then, from (\ref{eq:main results3}) and (\ref{eq:main results2}), we have the desired result
\begin{align}
\mu(u,v+\tau;\alpha)
 &=
 \Phi(u,v+\tau;\alpha)\mu(v+\tau,u;\alpha) \nonumber \\
 &=
 \Phi(u,v;\alpha)\big({-}{\rm e}^{-2\pi{\rm i}(u-v)}q^\frac{\alpha}{2}\mu(v,u;\alpha)+{\rm e}^{-\pi{\rm i}(u-v)}q^\frac{\alpha}{2}\mu(v,u;\alpha-1)\big)\label{eq:v+tau} \\
 &=
 -{\rm e}^{-2\pi{\rm i}(u-v)}q^\frac{\alpha}{2}\mu(u,v;\alpha)+{\rm e}^{-\pi{\rm i}(u-v)}q^\frac{\alpha}{2}\mu(u,v;\alpha-1).\tag*{\qed}
\end{align}\renewcommand{\qed}{}
\end{proof}

We define the function $J_\nu^{(2)}$, one of Jackson $q$-Bessel functions, as follows \cite[p.~23]{KLS}:
\[
J_\nu^{(2)}(x;q):=\frac{\big(q^{\nu+1}\big)_\infty}{(q)_\infty}\left(\frac{x}{2}\right)^\nu{}_0\phi_1\hyper{-}{q^{\nu+1}}{q}{-\frac{x^2q^{\nu+1}}{4}}.
\]
It is known that this function has the following expression \cite[equation~(3.2)]{Ko}:
\begin{gather}\label{q-Bessel}
J_\nu^{(2)}(x;q)=\frac{1}{(q)_\infty}\left(\frac{x}{2}\right)^\nu{}_1\phi_1\hyper{-x^2/4}{0}{q}{q^{\nu+1}}.
\end{gather}
From the equations (\ref{eq:main results3}), (\ref{amuconnect}) and (\ref{q-Bessel}), we obtain some relations between the $q$-Bessel function and $\mu(u,v;\alpha)$.
\begin{cor}\label{rmk q-Bessel}
We have
\begin{align}\label{mu-j+j}
{\rm i}q^\frac{1}{8}\mu(u,v;\alpha)
 =
 \Phi(u,v;\alpha)j(u-v;\alpha)+j(v-u;\alpha),
\end{align}
and
\begin{gather*}
 \frac{\vartheta_{11}(\alpha\tau)\vartheta_{11}(u-v)\vartheta_{11}(z)\vartheta_{11}(u+v+z-\alpha\tau)}{\vartheta_{11}(u-\alpha\tau)\vartheta_{11}(v)\vartheta_{11}(u+z-\alpha\tau)\vartheta_{11}(v+z)}{\rm e}^{2\pi{\rm i}\alpha(u-v)}j(u-v;\alpha+1)\nonumber\\
 \qquad{}={\rm i}q^\frac{1}{8}\mu(u+z,v+z;\alpha+1)-{\rm i}q^\frac{1}{8}\mu(u,v;\alpha+1),
\end{gather*}
where
\begin{align*}
j(w;\alpha):={}& {\rm i}q^\frac{1}{8}\frac{(q)_\infty^2{\rm e}^{-\frac{\pi{\rm i}w}{2\tau}}}{\vartheta_{11}(w)(q^{1-\alpha})_\infty}J_{\frac{w}{\tau}}^{(2)}\big(2{\rm i}{\rm e}^{\pi{\rm i}(1-\alpha)\tau};q\big)\\
 ={}& {\rm i}q^\frac{1}{8}\frac{(q)_\infty}{(q^{1-\alpha})_\infty}\frac{{\rm e}^{\pi{\rm i}(1-\alpha)w}}{\vartheta_{11}(w)}\,{}_1\phi_1\hyper{q^{1-\alpha}}{0}{q}{{\rm e}^{2\pi{\rm i}w}q}.
\end{align*}
\end{cor}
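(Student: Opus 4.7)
The plan is to deduce \eqref{mu-j+j} from the connection formula \eqref{amuconnect} of Lemma~\ref{lem:lemma2} and then to obtain the second identity by applying \eqref{mu-j+j} twice and exploiting a cancellation.

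For \eqref{mu-j+j}, I would substitute $x={\rm e}^{2\pi{\rm i}(u-v)}$, $\lambda=-{\rm e}^{2\pi{\rm i}u}$ and $a=q^\alpha$ into \eqref{amuconnect}. By Theorem~\ref{thm:mu and q-Hermite--Weber} (more precisely, by the computation in its proof) the left-hand entry satisfies $f_0(x,\lambda)={\rm i}q^{\frac{1}{8}}\mu(u,v;\alpha)$. Converting each $\theta_q(-{\rm e}^{2\pi{\rm i}w})$ to $\vartheta_{11}(w)$ via $\theta_q(-{\rm e}^{2\pi{\rm i}w})={\rm i}q^{-\frac{1}{8}}{\rm e}^{\pi{\rm i}w}\vartheta_{11}(w)$ (immediate from the product form of $\vartheta_{11}$), a direct calculation gives
\[
g_0\bigl({\rm e}^{2\pi{\rm i}w}\bigr)=-\frac{(q^{1-\alpha})_\infty}{(q)_\infty}\,j(w;\alpha),\qquad g_\infty\bigl({\rm e}^{2\pi{\rm i}w}\bigr)=-\frac{(q^{1-\alpha})_\infty}{(q)_\infty}\,j(-w;\alpha),
\]
while the matrix coefficient in front of $g_0$ in \eqref{amuconnect}, after theta-to-$\vartheta_{11}$ conversion and cancellation with the global prefactor $-(q)_\infty/(q^{1-\alpha})_\infty$, collapses to precisely $\Phi(u,v;\alpha)$. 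Setting $w=u-v$ then yields \eqref{mu-j+j}, and the alternative expression of $j$ in terms of the Jackson $q$-Bessel function $J_\nu^{(2)}$ follows by substituting \eqref{q-Bessel}.

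For the second identity, I would apply \eqref{mu-j+j} with $\alpha$ replaced by $\alpha+1$ twice: once at $(u,v)$ and once at $(u+z,v+z)$. Since the functions $j(\pm(u-v);\alpha+1)$ depend only on the difference $u-v$ and are thus invariant under the simultaneous shift $(u,v)\mapsto(u+z,v+z)$, the $j(v-u;\alpha+1)$ contributions cancel in the subtraction, leaving
\[
{\rm i}q^{\frac{1}{8}}\bigl[\mu(u+z,v+z;\alpha+1)-\mu(u,v;\alpha+1)\bigr]=\bigl[\Phi(u+z,v+z;\alpha+1)-\Phi(u,v;\alpha+1)\bigr]\,j(u-v;\alpha+1).
\]
Using the quasi-periodicity $\vartheta_{11}(w-\tau)=-{\rm e}^{-\pi{\rm i}\tau+2\pi{\rm i}w}\vartheta_{11}(w)$ to rewrite the $(\alpha+1)\tau$-shifts inside $\Phi(\cdot;\alpha+1)$ as $\alpha\tau$-shifts, the exponential prefactor ${\rm e}^{2\pi{\rm i}\alpha(u-v)}$ factors out cleanly, and putting the resulting theta ratio over a common denominator reduces the target to the Riemann-type four-term theta relation
\begin{align*}
&\vartheta_{11}(v+z-\alpha\tau)\vartheta_{11}(u+z)\vartheta_{11}(u-\alpha\tau)\vartheta_{11}(v)-\vartheta_{11}(v-\alpha\tau)\vartheta_{11}(u)\vartheta_{11}(u+z-\alpha\tau)\vartheta_{11}(v+z)\\
&\qquad=\vartheta_{11}(\alpha\tau)\vartheta_{11}(u-v)\vartheta_{11}(z)\vartheta_{11}(u+v+z-\alpha\tau).
\end{align*}

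The main obstacle is this theta relation. I would verify it by Liouville's theorem, viewing the difference of the two sides as a function of $z$: both sides share the same quasi-periodicity (unchanged under $z\mapsto z+1$ and acquiring the factor ${\rm e}^{-2\pi{\rm i}\tau(1-\alpha)-2\pi{\rm i}(u+v+2z)}$ under $z\mapsto z+\tau$), so the difference lies in a two-dimensional space of holomorphic sections, and direct evaluation shows it vanishes at the three distinct points $z=0$, $z=-u$ and $z=-u-v+\alpha\tau$, forcing it to vanish identically. Alternatively, since the translation formula \eqref{eq:main results3} has already been established in Theorem~\ref{thm:Thm1} via Theorem~\ref{thm:tuchimi connection}, this theta identity is implicit in that proof and may simply be invoked as a consequence.
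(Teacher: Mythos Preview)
Your proposal is correct and matches the paper's one-line indication (``from \eqref{eq:main results3}, \eqref{amuconnect} and \eqref{q-Bessel}'') essentially word for word on the first identity: the substitution into \eqref{amuconnect} and the conversions $\theta_q(-{\rm e}^{2\pi{\rm i}w})={\rm i}q^{-1/8}{\rm e}^{\pi{\rm i}w}\vartheta_{11}(w)$ and \eqref{q-Bessel} are exactly what is needed, and your identification of the matrix entry with $\Phi(u,v;\alpha)$ checks out.

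For the second identity your route differs slightly from what the paper seems to have in mind. The paper points to the translation formula \eqref{eq:main results3} itself, whereas you instead apply \eqref{mu-j+j} at $(u,v)$ and at $(u+z,v+z)$ and subtract, reducing everything to the three-term Weierstrass-type theta relation which you then verify by Liouville's theorem. Both approaches work; yours is arguably cleaner and more self-contained, since the $j(v-u;\alpha+1)$ terms cancel on the nose and the remaining content is a standard theta identity, while the paper's route via \eqref{eq:main results3} introduces $\mu(v,u;\alpha+1)$ as an intermediate object and the ${}_1\phi_1$ appearing there has argument ${\rm e}^{-2\pi{\rm i}(u-v)}q$ rather than ${\rm e}^{2\pi{\rm i}(u-v)}q$, so an extra symmetry step is needed. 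Your observation that $\Phi(u,v;\alpha+1)=\Phi(u,v;\alpha)$ (already recorded in the proof of Theorem~\ref{thm:Thm1}) makes the exponential bookkeeping transparent, and your Liouville check at $z=0,-u,-u-v+\alpha\tau$ is sound since the difference lives in a two-dimensional space of theta-type sections. One small remark: two zeros already suffice for the Liouville step, so the third is a harmless redundancy.
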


\begin{rmk}
If $\alpha$ is a positive integer $k$ in the equation (\ref{mu-j+j}), the function $j(w;\alpha)$ diverges.
So, taking the limit $\alpha \to k$ of (\ref{mu-j+j}) is not straightforward.
For example, if we take the limit $\alpha \to 1$ in an appropriate setting, we obtain
\begin{gather}
\label{eq: Bmconect}
\vartheta_{11}(u-v)\mu(u,v)
=\frac{1}{2\pi{\rm i}}\left(\frac{\vartheta_{11}'(u)}{\vartheta_{11}(u)}-\frac{\vartheta_{11}'(v)}{\vartheta_{11}(v)}\right)+\sum_{n \in \mathbb{Z}\setminus \{0\}}\frac{(-1)^nq^\frac{n(n+1)}{2}}{1-q^n}{\rm e}^{2\pi{\rm i}n(u-v)}.
\end{gather}
This equation (\ref{eq: Bmconect}) is also obtained by the specialization of (\ref{eq:mu translation})
\[
\mu(u,v+\epsilon)=\mu(\epsilon,u-v)+\frac{{\rm i}q^{\frac{1}{8}}(q)_{\infty }^{3}\vartheta_{11}(v)\vartheta_{11}(u+\epsilon)}{\vartheta_{11}(u)\vartheta_{11}(v+\epsilon)\vartheta_{11}(\epsilon)\vartheta_{11}(u-v)},
\]
and taking the limit $\epsilon \to 0$.
\end{rmk}
Next, we prove Corollary \ref{cor:Thm1 k}.
Since all other equations except equation (\ref{eq:mu k 7}) are obtained immediately from Theorem~\ref{thm:Thm1}, we prove only~(\ref{eq:mu k 7}).

\begin{proof}[Proof of Corollary \ref{cor:Thm1 k}]
The first equation is obvious by the Definition \ref{def:mua}.
Next, using the partial fraction decomposition
\[
\prod_{l=0}^{k-1}\frac{1}{1-{\rm e}^{2\pi{\rm i}u}q^{n-l}}
 =
 \sum_{j=0}^{k-1}\frac{(-1)^{k-j-1}}{(q)_j(q)_{k-1-j}}\frac{q^{\frac{(k-j)(k-j-1)}{2}}}{1-{\rm e}^{2\pi{\rm i}u}q^{n-j}},
\]
we have
\begin{align*}
\mu(u,v;k)
 &=
 \frac{{\rm e}^{\pi{\rm i}k(u-v)}}{\vartheta_{11}(v)}\sum_{j=0}^{k-1}\sum_{n\in\Zz}\frac{(-1)^n{\rm e}^{2\pi{\rm i}(n+\frac{1}{2})v}q^\frac{n(n+1)}{2}}{1-{\rm e}^{2\pi{\rm i}u}q^{n-j}}\frac{(-1)^{k-1-j}q^\frac{(k-j)(k-j-1)}{2}}{(q)_j(q)_{k-1-j}} \\
 &=
 {\rm e}^{\pi{\rm i}(k-1)(u-v+\tau)}\sum_{j=0}^{k-1}\frac{(-1)^{k-1-j}}{(q)_j(q)_{k-1-j}}q^\frac{(k-1-j)^2}{2}\mu(u-j\tau,v).\tag*{\qed}
\end{align*}
\renewcommand{\qed}{}
\end{proof}

As an important application of the recursion formula (\ref{eq:main results5}), we also obtain the following relation (reducing formula) for $\mu(u,v;\alpha)$.
\begin{cor}
\label{cor:mu a reduction}
For $\alpha\in\Cz$, we have
\begin{gather}
\label{eq:mu a reduction}
\mu\left(u,u+\frac{1}{2};\alpha-1\right)=\big(q^{-\alpha}-1\big)\mu\left(u,u+\frac{1}{2};\alpha+1\right).
\end{gather}
In particular, for any non-negative integer $k$,
\begin{gather*}
\mu\left(u,u+\frac{1}{2};2k\right)
 =
 -{\rm i}q^{-\frac{1}{8}}\frac{q^{k^2}}{\big(q;q^2\big)_{k}}, \\
\mu\left(u,u+\frac{1}{2};2k+1\right)
 =
 \frac{q^{k(k+1)}}{\big(q^2;q^2\big)_{k}}\mu\left(u,u+\frac{1}{2}\right)
 =
 \frac{q^{k(k+1)}}{\big(q^2;q^2\big)_{k}}\frac{{\rm i}q^{\frac{1}{4}}(q)_{\infty }^{4}(-q)_{\infty }^{2}\vartheta_{11}(2u)}{\vartheta_{11}(u)^{2}\vartheta_{11}\big(u+\frac{1}{2}\big)^{2}}.
\end{gather*}
\end{cor}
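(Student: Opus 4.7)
The plan is to exploit the three-term recurrence (\ref{eq:main results5}) at the special value $v=u+\frac{1}{2}$, where the coefficient $2\cos\pi(u-v)=2\cos(-\pi/2)=0$ kills the left-hand side identically. The recurrence then collapses to
\[
0=\bigl(1-q^{-\alpha}\bigr)\mu\bigl(u,u+\tfrac{1}{2};\alpha+1\bigr)+\mu\bigl(u,u+\tfrac{1}{2};\alpha-1\bigr),
\]
and solving for the smaller index yields precisely the reduction formula (\ref{eq:mu a reduction}).

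For the explicit evaluations at integer parameter, I would induct on $k\ge 0$ starting from the two base cases supplied by Definition~\ref{def:mua}, namely $\mu(u,u+\frac{1}{2};0)=-{\rm i}q^{-1/8}$ and $\mu(u,u+\frac{1}{2};1)=\mu(u,u+\frac{1}{2})$. Applying $(q^{-\alpha}-1)^{-1}=q^{\alpha}/(1-q^{\alpha})$, iteration of (\ref{eq:mu a reduction}) with $\alpha=1,3,\dots,2k-1$ gives
\[
\mu\bigl(u,u+\tfrac{1}{2};2k\bigr)=-{\rm i}q^{-1/8}\prod_{j=1}^{k}\frac{q^{2j-1}}{1-q^{2j-1}}=-{\rm i}q^{-1/8}\,\frac{q^{k^{2}}}{(q;q^{2})_{k}},
\]
where the exponent telescopes via $\sum_{j=1}^{k}(2j-1)=k^{2}$; iteration with $\alpha=2,4,\dots,2k$ gives analogously
\[
\mu\bigl(u,u+\tfrac{1}{2};2k+1\bigr)=\mu\bigl(u,u+\tfrac{1}{2}\bigr)\prod_{j=1}^{k}\frac{q^{2j}}{1-q^{2j}}=\frac{q^{k(k+1)}}{(q^{2};q^{2})_{k}}\,\mu\bigl(u,u+\tfrac{1}{2}\bigr),
\]
which is the first equality of the odd case.

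What remains, and is the main obstacle, is the explicit closed form for $\mu(u,u+\frac{1}{2})$ itself, since this is independent of the recurrence. I would derive it by specializing the translation formula (\ref{eq:mu translation}) in a limit in which one of the arguments tends to a half-period so that the elementary theta-quotient on the right-hand side produces the claimed expression, using in addition the Jacobi derivative value $\vartheta_{11}'(0)=-2\pi q^{1/8}(q)_{\infty}^{3}$ and the identity $\vartheta_{11}(\tfrac{1}{2})=-2q^{1/8}(q)_{\infty}(-q)_{\infty}^{2}$; alternatively one can reduce the bilateral $q$-hypergeometric expression (\ref{eq:sym mu func 2}) at $y=-x$, where the resulting sum collapses to a theta quotient through Jacobi's triple product. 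Equality with the claimed theta ratio is then confirmed by matching zeros, poles, quasi-periodicities in $u$, and the residue at $u=0$, after which the usual elliptic-uniqueness argument forces the identification.
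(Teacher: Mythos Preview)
Your derivation of the reduction formula~(\ref{eq:mu a reduction}) from the three-term recurrence~(\ref{eq:main results5}) at $v=u+\tfrac{1}{2}$ is exactly the intended argument, and the iterated evaluations at $\alpha=2k$ and $\alpha=2k+1$ are correct and match the paper.

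Where your proposal diverges is in the evaluation of $\mu\bigl(u,u+\tfrac{1}{2}\bigr)$ itself. You sketch either a limiting specialization of~(\ref{eq:mu translation}) involving $\vartheta_{11}'(0)$, or a collapse of~(\ref{eq:sym mu func 2}) at $y=-x$, backed up by an elliptic-uniqueness argument. None of this is wrong in principle, but it is considerably more work than necessary, and the limit/derivative route is vague as stated (the target formula involves no $\vartheta_{11}'(0)$, so it is unclear which limit you mean or why that term would appear and then cancel). The paper instead makes a \emph{direct} specialization of~(\ref{eq:mu translation}) with $v=u-\tfrac{1}{2}$ and $z=\tfrac{1}{2}$: the left-hand side becomes $\mu\bigl(u+\tfrac{1}{2},u\bigr)=\mu\bigl(u,u+\tfrac{1}{2}\bigr)$ by~(\ref{eq:mu symmetry2}), while the $\mu$-term on the right becomes $\mu\bigl(u,u-\tfrac{1}{2}\bigr)=-\mu\bigl(u,u+\tfrac{1}{2}\bigr)$ by~(\ref{eq:mu periodicity}). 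Thus the identity reads $2\mu\bigl(u,u+\tfrac{1}{2}\bigr)=[\text{theta ratio}]$, and plugging in $\vartheta_{11}\bigl(\tfrac{1}{2}\bigr)=-2q^{1/8}(q,-q,-q)_{\infty}$ and $\vartheta_{11}\bigl(u-\tfrac{1}{2}\bigr)=-\vartheta_{11}\bigl(u+\tfrac{1}{2}\bigr)$ gives the closed form immediately. No limit, no Jacobi derivative, no uniqueness argument is needed. Your elliptic-uniqueness approach is essentially what the paper uses in the subsequent Remark to identify $\mu\bigl(u,u+\tfrac{1}{2}\bigr)$ with an expression in Jacobian elliptic functions, but for the theta-product form it is overkill.
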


\begin{proof}[Proof of Corollary \ref{cor:mu a reduction}]
It is enough to show that
\begin{gather*}
\mu\left(u,u+\frac{1}{2}\right)
 =
 \frac{{\rm i}q^{\frac{1}{4}}(q)_{\infty }^{4}(-q)_{\infty }^{2}\vartheta_{11}(2u)}{\vartheta_{11}(u)^{2}\vartheta_{11}\left(u+\frac{1}{2}\right)^{2}}.
\end{gather*}
If we put $v=u-\frac{1}{2}$ and $z=\frac{1}{2}$ in (\ref{eq:mu translation}), then
\begin{gather*}
\mu\left(u,u+\frac{1}{2}\right)
 =
 \mu\left(u-\frac{1}{2},u\right)
 +\frac{{\rm i}q^{\frac{1}{8}}(q)_{\infty }^{3}\vartheta_{11}\big(\frac{1}{2}\big)\vartheta_{11}(2u)}{\vartheta_{11}\big(u-\frac{1}{2}\big)\vartheta_{11}(u)^{2}\vartheta_{11}\big(u+\frac{1}{2}\big)}.
\end{gather*}
Since
\begin{gather*}
\mu\left(u-\frac{1}{2},u\right)
 =
 -\mu\left(u+\frac{1}{2},u\right)
 =-\mu\left(u,u+\frac{1}{2}\right), \\
\vartheta_{11}\left(u-\frac{1}{2}\right)
 =
 -\vartheta_{11}\left(u+\frac{1}{2}\right),
\end{gather*}
and
\[
\vartheta_{11}\left(\frac{1}{2}\right)
 =
 -2q^{\frac{1}{8}}(q,-q,-q)_{\infty },
\]
we obtain the conclusion.
\end{proof}

\begin{rmk}
(1) Corollary \ref{cor:mu a reduction} is a generalization of a classical evaluation~\cite{G}.
In fact, Gauss proved the case of negative integers in our formula~(\ref{eq:mu a reduction}), which is the special value of continuous $q$-Hermite polynomial $H_{n}(x \mid q)$ at $x=0$:
\begin{gather}
\label{eq:Gauss evaluation}
{\rm i}^{-n}H_{n}(0 \mid q)
 =
 \frac{\big(q^{n-1};q^{-2}\big)_{\infty}}{\big(q^{-1};q^{-2}\big)_{\infty}}
 =\begin{cases}
 0 & \text{if } n=2N-1,\\
 (q;q^{2})_{N} & \text{if } n=2N.
 \end{cases}
\end{gather}
This formula (\ref{eq:Gauss evaluation}) is a key lemma to derive the product formula of the quadratic Gauss sum:
\begin{gather}
\label{eq:Gauss sum product}
\sum_{k=1}^{2N}{\rm e}^{\frac{2\pi{\rm i} k^{2}}{2N+1}}
 =
 \prod_{j=1}^{N}
 \big({\rm e}^{\frac{2\pi{\rm i}}{2N+1}(2j-1)}-{\rm e}^{-\frac{2\pi{\rm i}}{2N+1}(2j-1)}\big).
\end{gather}
In fact, by using this product formula (\ref{eq:Gauss sum product}), Gauss determined the sign of the quadratic Gauss sum, and gave a proof (the 4th proof) of the quadratic reciprocity law.

(2) The function $F(u):=\mu \big(u,u+\frac{1}{2}\big)$ also has the following expressions:
\begin{align}
\label{eq:mu and elliptic}
F(u)
 &=
 -\frac{1}{2\pi{\rm i}}\frac{2K}{\vartheta _{11}\big(\frac{1}{2}\big)}\frac{1}{\sn{(2Ku,k)}\sn{\big(2K\big(u+\frac{1}{2}\big),k\big)}} \\
\label{eq:mu and elliptic2}
 &=
 -\frac{1}{2\pi{\rm i}}\frac{2K}{\vartheta _{11}\big(\frac{1}{2}\big)}\frac{\dn{(2Ku,k)}}{\sn{(2Ku,k)}\cn{(2Ku,k)}}.
\end{align}
Here, $k=k(\tau )$ is the elliptic modulus
and $K=K(\tau )$ is the elliptic period:
\[
K:=\frac{\pi }{2}(q)_{\infty }^{2}\big({-}q^{\frac{1}{2}}\big)_{\infty }^{4}=\int_{0}^{1}\frac{{\rm d}t}{\sqrt{\big(1-t^{2}\big)\big(1-k^{2}t^{2}\big)}},
\]
and $\mathrm{sn}$, $\mathrm{cn}$ and $\mathrm{dn}$ are Jacobian elliptic functions (see, for example, \cite[Chapter~11]{BW}).

Although this fact is an exercise of the $\mu$-function and elliptic functions, we have not been able to find any appropriate references about this result.
Hence, we give a sketch of the proof of (\ref{eq:mu and elliptic}) and (\ref{eq:mu and elliptic2}).

From the equations (\ref{eq:mu pseudo periodicity}) and (\ref{eq:mu symmetry3}), the function $F(u)$ is odd:
\begin{gather*}
F(-u) =
 \mu \left(-u,-u+\frac{1}{2}\right)
 =
 \mu \left(u,u-\frac{1}{2}\right)
 =
 -\mu \left(u,u+\frac{1}{2}\right)
 =
 -F(u).
\end{gather*}
The double periodicity of $F(u)$ follows from (\ref{eq:mu pseudo periodicity}) and (\ref{eq:mu symmetry}):
\begin{gather*}
F(u+1)
 =
 \mu \left(u+1,u+1+\frac{1}{2}\right)
 =
 (-1)^{2}\mu \left(u,u+\frac{1}{2}\right)
 =
 F(u), \\
F(u+\tau )
 =
 \mu \left(u+\tau,u+\tau +\frac{1}{2}\right)
 =
 \mu \left(u,u+\frac{1}{2}\right)
 =
 F(u).
\end{gather*}
By the definition of Zwegers' $\mu$-function, the function $F(u)$ has simple poles at each lattice point on $\mathbb{Z}+\mathbb{Z}\tau $ and $\mathbb{Z}+\frac{1}{2}+\mathbb{Z}\tau $ and no other singularities, and the residues of $F(u)$ at each pole are the following:
\begin{gather*}
\mathop{\mathrm{Res}}_{u=0}F(u)
 =
 \lim_{u \to 0}\frac{{\rm e}^{\pi{\rm i}u}}{\vartheta _{11}\big(u+\frac{1}{2}\big)}\frac{u}{1-{\rm e}^{2\pi{\rm i}u}}
 =
 -\frac{1}{2\pi{\rm i}}\frac{1}{\vartheta _{11}\big(\frac{1}{2}\big)}, \\
\mathop{\mathrm{Res}}_{u=-\frac{1}{2}}F(u)
 =
 \lim_{u \to -\frac{1}{2}}\frac{{\rm e}^{\pi{\rm i}\left(u+\frac{1}{2}\right)}}{\vartheta _{11}(u)}\frac{u}{1-{\rm e}^{2\pi{\rm i}\big(u+\frac{1}{2}\big)}}
 =
 \frac{1}{2\pi{\rm i}}\frac{1}{\vartheta _{11}\big(\frac{1}{2}\big)}.
\end{gather*}

On the other hand, by a simple calculation of elliptic functions, the right-hand side of (\ref{eq:mu and elliptic}) has exactly the same properties as $F(u)$.
Since the function
\[
G(u):=F(u)+\frac{1}{2\pi{\rm i}}\frac{2K}{\vartheta _{11}(\frac{1}{2})}\frac{1}{\sn{(2Ku,k)}\sn{\big(2K\big(u+\frac{1}{2}\big),k\big)}}
\]
is odd and a constant with respect to $u$, $G(u)$ is identically zero.

Finally, by (\ref{eq:mu and elliptic}) and the translation formula
\[
\sn(z+K,k)=\frac{\cn(z,k)}{\dn(z,k)},
\]
we obtain the equation (\ref{eq:mu and elliptic2}).
\end{rmk}

Next, we prove Theorem \ref{thm:mu and CqH} for the relation between our $\mu$-functions $\mu(u,v;k)$ and the continuous $q$-Hermite polynomials.

\begin{proof}[Proof of Theorem \ref{thm:mu and CqH}]
For $n=0,1,2,\dots$, the continuous $q$-Hermite polynomials satisfy the following recursion equation (see \cite[p.~541, equation~(14.26.3)]{KLS}):
\begin{gather}\label{q-Her Rec}
2xH_n(x\mid q)=H_{n+1}(x\mid q)+\big(1-q^n\big)H_{n-1}(x\mid q), \\
\label{q-Her initial}
H_{0}(x\mid q)=1, \qquad H_1(x\mid q)=2x.
\end{gather}
Let us now set
\[
\widehat{H}_{k}(x\mid q):={\rm i}q^\frac{1}{8}\mu(u,v;-k), \qquad x=\cos\pi(u-v).
\]
To prove $\widehat{H}_{k}(x\mid q)={H}_{k}(x\mid q)$, it is enough to show that the $\widehat{H}_{k}(x\mid q)$ satisfy the recurrence relation (\ref{q-Her Rec}) and the initial condition (\ref{q-Her initial}).
The recurrence relation (\ref{q-Her Rec}) is equal to the formula (\ref{eq:mu k 5}) exactly.
For the initial condition (\ref{q-Her initial}), by the definition and the triple product identity, we have
\begin{gather}\label{eq:Hhat initial0}
\widehat{H}_{0}(x\mid q)
 =
 {\rm i}q^\frac{1}{8}\mu(u,v;0)
 =
 \frac{1}{\vartheta_{11}(v)}
 \sum_{n\in\Zz}{\rm e}^{2\pi{\rm i}(n+\frac{1}{2})(v+\frac{1}{2})+\pi{\rm i}(n+\frac{1}{2})^{2}\tau }
 =
 1.
\end{gather}
The equation $\widehat{H}_{1}(x\mid q)=2x$ follows from (\ref{eq:Hhat initial0}) and the case of $k=0$ in (\ref{eq:mu k 5}).
\end{proof}

\begin{rmk}
In the equation (\ref{mu-j+j}), let $\alpha$ be a negative integer $-n$.
In this case, the equation~(\ref{mu-j+j}) is
\begin{gather}
H_n(x\mid q)=\frac{{\rm i}q^\frac{1}{8}(q)_n}{\vartheta_{11}(u-v)}\left\{{\rm e}^{\pi{\rm i}(1+n)(u-v)}{}_1\phi_1\hyper{q^{1+n}}{0}{q}{{\rm e}^{2\pi{\rm i}(u-v)}q}\right.\nonumber\\
\label{eq:Hermite 2 sum expression}
\left.
\hphantom{H_n(x\mid q)=\frac{{\rm i}q^\frac{1}{8}(q)_n}{\vartheta_{11}(u-v)}}{}
-{\rm e}^{-\pi{\rm i}(1+n)(u-v)}{}_1\phi_1\hyper{q^{1+n}}{0}{q}{{\rm e}^{-2\pi{\rm i}(u-v)}q}\right\}.
\end{gather}
This is a non-trivial result that the right-hand side of (\ref{eq:Hermite 2 sum expression}) is the sum of two infinite sums, but cancels out with the theta function to become a finite sum.

This equation is also regarded as the $a,b,c,d \to 0$ limit case of an Askey--Wilson polynomial's expression by the sum of two ${}_4\phi_3$ functions \cite[(2.11)]{S} (but this reference has small typos. The denominator terms of ${}_4\phi_3$ (${}_4\varphi_3$) $q^{1-\nu+z}$ and $q^{1-\nu-z}$ should be $q^{1-\nu+z}/d$ and $q^{1-\nu-z}/d$, respectively).
\end{rmk}

Next, we prove Theorem \ref{thm:Sr} for the generating function $S(r)$.

\begin{proof}[Proof of Theorem \ref{thm:Sr}]
(1) Since the equation (\ref{eq:Sr rec2}) follows from (\ref{eq:Sr rec1}) immediately, it is enough to show (\ref{eq:Sr rec1}).
We find by (\ref{eq:mu k 5}) that
\begin{gather*}
S(r)-S\left(\frac{r}{q}\right)=
 \sum_{k=0}^\infty\big(1-q^{-k}\big)\mu(u,v;k+1)r^k \\
\hphantom{S(r)-S\left(\frac{r}{q}\right)}{}
=
 \sum_{k=0}^\infty(2\cos\pi(u-v)\mu(u,v;k)-\mu(u,v;k-1))r^k \nonumber\\
\hphantom{S(r)-S\left(\frac{r}{q}\right)}{}
=
 2r\cos\pi(u-v)\sum_{k=-1}^\infty\mu(u,v;k+1)r^k-r^2\sum_{k=-2}^\infty\mu(u,v;k+1)r^k \nonumber\\
\hphantom{S(r)-S\left(\frac{r}{q}\right)}{}=
 \big(2\cos\pi(u-v)-r^2\big)S(r)-r\mu(u,v;0) \\
\hphantom{S(r)-S\left(\frac{r}{q}\right)=}{}
 +2\cos\pi(u-v)\mu(u,v;0)-\mu(u,v;-1). \nonumber
\end{gather*}
By the case of $k=0$ in recursion equation (\ref{eq:mu k 5}):
\begin{gather*}
2\cos\pi(u-v)\mu(u,v;0)-\mu(u,v;-1)=0
\end{gather*}
and $\mu(u,v;0)=-{\rm i}q^{-\frac{1}{8}}$, we have the conclusion.

(2) Let $N=0,1,2,\dots$. Using the equation (\ref{eq:mu k 5}), we find that
\begin{align}
S(r)
 &=
 \big(1-r{\rm e}^{\pi{\rm i}(u-v)}q\big)\big(1-r{\rm e}^{-\pi{\rm i}(u-v)q}\big)S(rq)-{\rm i}rq^\frac{7}{8} \nonumber \\
 &=
\label{eq:N expression Sr}
 \big(r{\rm e}^{\pi{\rm i}(u-v)}q,r{\rm e}^{-\pi{\rm i}(u-v)}q\big)_NS\big(rq^N\big)-
 {\rm i}rq^\frac{7}{8}\sum_{n=0}^{N-1}q^n\big(r{\rm e}^{\pi{\rm i}(u-v)}q,r{\rm e}^{-\pi{\rm i}(u-v)}q\big)_n.
\end{align}
Taking the limit $N \to \infty$ in (\ref{eq:N expression Sr}), we have
\begin{align}
S(r)
 &=
 \big(r{\rm e}^{\pi{\rm i}(u-v)}q,r{\rm e}^{-\pi{\rm i}(u-v)}q\big)_\infty\mu(u,v)-
 {\rm i}rq^\frac{7}{8}\sum_{n=0}^\infty q^n\big(r{\rm e}^{\pi{\rm i}(u-v)}q,r{\rm e}^{-\pi{\rm i}(u-v)}q\big)_n \nonumber \\
 &=
 \big(r{\rm e}^{\pi{\rm i}(u-v)}q,r{\rm e}^{-\pi{\rm i}(u-v)}q\big)_{\infty }
 \mu(u,v)
 -
 {\rm i}rq^{\frac{7}{8}}\,{_{3}\phi _2}\left(\begin{matrix}q, r{\rm e}^{\pi{\rm i}(u-v)}q, r{\rm e}^{-\pi{\rm i}(u-v)}q \\ 0,0 \end{matrix};q,q\right).\!\!\!\label{eq:gen Sr expression}
\end{align}
The conclusion follows from (\ref{eq:gen Sr expression}) and Andrews' formula \cite[p.~298, Exercise 10.8]{GR}:
\begin{gather}
\label{eq:Andrews formula}
{_{3}\phi _2}\left(\begin{matrix}a,b,c \\ d,e \end{matrix};q,x\right)
 =
 \frac{(ax,b,c)_{\infty }}{(x,d,e)_{\infty }}
 \Phi^{(1)}\left(\begin{matrix}x;d/b,e/c \\ ax \end{matrix};q;b,c\right).
\end{gather}

Finally, we prove (\ref{eq:Sr expression3}).
From (\ref{eq:Sr expression2}), we have
\begin{gather*}
S(r) =
 \big(r{\rm e}^{\pi{\rm i}(u-v)}q,r{\rm e}^{-\pi{\rm i}(u-v)}q\big)_{\infty }\\
\hphantom{S(r) =}{}\times \left\{
 \mu(u,v)-\frac{{\rm i}rq^{\frac{7}{8}}}{1-q}\Phi ^{(1)}\left(\begin{matrix}q;0,0 \\ q^2 \end{matrix};q;r{\rm e}^{\pi{\rm i}(u-v)}q,r{\rm e}^{-\pi{\rm i}(u-v)}q\right)\right\} \\
\hphantom{S(r)}{}
=
 \big(r{\rm e}^{\pi{\rm i}(u-v)}q, r{\rm e}^{-\pi{\rm i}(u-v)}q\big)_{\infty } \\
\hphantom{S(r) =}{}\times
 \left\{
 \mu(u,v)
 -\frac{{\rm i}q^{\frac{7}{8}}}{1-q}\sum_{n\geq 0}\sum_{k=0}^{n}
 \frac{(q)_{n}}{(q)_{k}(q)_{n-k}(q^{2})_{n}}{\rm e}^{\pi{\rm i}(2k-n)(u-v)}q^{n}r^{n+1}\right\} \\
\hphantom{S(r)}{} =
 \big(r{\rm e}^{\pi{\rm i}(u-v)}q, r{\rm e}^{-\pi{\rm i}(u-v)}q\big)_{\infty }
\left\{
 \mu(u,v)
 -\sum_{n\geq 0}\frac{{\rm i}q^{\frac{7}{8}}H_{n}(\cos{\pi (u-v)}\mid q)}{(q)_{n+1}}q^{n}r^{n+1}\right\}.
\end{gather*}
From Theorem \ref{thm:mu and CqH}, $H_{n}(x\mid q):={\rm i}q^\frac{1}{8}\mu(u,v;-n)$, so (\ref{eq:Sr expression3}) holds.
\end{proof}

By re-expanding the generating function $S(r)$, we obtain the following relationship between the function $\mu(u,v;k+1)$ and the continuous $q$-Hermite polynomials.
\begin{cor}\label{cor:Sr expressions}
For non-negative integers $k$, we have
\begin{gather}
\label{eq:Sr expressions}
\mu(u,v;k+1)
 =
 \sum_{l=0}^{k}
 \frac{q^{l}}{(q)_{l}}F_{k-l+1}(\cos\pi(u-v)\mid q)\mu (u,v;1-l),
\end{gather}
where
\begin{align*}
F_{n+1}(\cos\pi w\mid q)
 :={}&
 {\rm e}^{- \pi{\rm i}nw}\, {}_1\phi_1\hyper{q^{-n}}{0}{q}{{\rm e}^{2 \pi{\rm i}w}q}\frac{(-1)^nq^\frac{n(n+1)}{2}}{(q)_n}\\
={}&
 {\rm e}^{-\frac{\pi{\rm i}w}{2\tau}}\frac{(q)_\infty}{(q^{-n})_n} J_\frac{w}{\tau}^{(2)}\big(2{\rm i}{\rm e}^{-\pi{\rm i}n\tau};q\big).
\end{align*}
For positive integers $m$, we have
\begin{gather}
\label{eq:Sr expressions 2}
\sum_{k=0}^{m}\mu{(u,v;k+1)}\frac{H_{m-k}(\cos{\pi (u-v)}\mid q)}{(q)_{m-k}}q^{m-k}
 =
 -\frac{{\rm i}q^\frac{7}{8}H_{m-1}(\cos{\pi (u-v)}\mid q)}{(q)_{m}}.
\end{gather}
\end{cor}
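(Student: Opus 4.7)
The plan is to derive both identities by comparing the Taylor coefficients in $r$ of the two sides of the closed form (\ref{eq:Sr expression3}) for the generating function $S(r)$. The key auxiliary step is to expand the $q$-infinite product $(re^{\pi{\rm i}(u-v)}q,\,re^{-\pi{\rm i}(u-v)}q;q)_\infty$ as a formal power series in $r$ whose coefficients are exactly $F_{n+1}(\cos\pi(u-v)\mid q)$.

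To obtain this expansion, I would apply the $q$-exponential identity $(xq;q)_\infty=\sum_{n\geq 0}\frac{(-1)^n q^{n(n+1)/2}}{(q)_n}x^n$ to both factors with $x=re^{\pi{\rm i}(u-v)}$ and $x=re^{-\pi{\rm i}(u-v)}$, then collect the coefficient of $r^k$. After the simplification
\[
\tfrac{n(n+1)}{2}+\tfrac{(k-n)(k-n+1)}{2} \;=\; n^2-kn+\tfrac{k(k+1)}{2},
\]
and the standard rewriting $(q^{-k};q)_j=(-1)^j q^{-kj+j(j-1)/2}(q)_k/(q)_{k-j}$, the resulting finite sum over $n$ is recognized as precisely the ${}_1\phi_1$ defining $F_{k+1}(\cos\pi(u-v)\mid q)$ in the statement (the equivalence with the $q$-Bessel form uses identity (\ref{q-Bessel})).

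With this expansion in hand, equation (\ref{eq:Sr expressions}) is a direct Cauchy product: insert $(re^{\pi{\rm i}(u-v)}q,re^{-\pi{\rm i}(u-v)}q;q)_\infty=\sum_{n\geq 0}F_{n+1}\, r^n$ into (\ref{eq:Sr expression3}) and read off $[r^k]$ on both sides, using $[r^k]S(r)=\mu(u,v;k+1)$.

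For (\ref{eq:Sr expressions_2}), I would divide (\ref{eq:Sr expression3}) by the $q$-product and use (\ref{eq:gen func of CqH}) (after the substitution $r\mapsto rq$, $\theta=\pi(u-v)$) to write
\[
\frac{1}{(re^{\pi{\rm i}(u-v)}q,\,re^{-\pi{\rm i}(u-v)}q;q)_\infty}\;=\;\sum_{j\geq 0}\frac{H_j(\cos\pi(u-v)\mid q)}{(q)_j}\,q^j r^j.
\]
Multiplying this by $S(r)$ on the left and equating $[r^m]$ with the coefficient $\frac{q^m\mu(u,v;1-m)}{(q)_m}$ from the right-hand side of (\ref{eq:Sr expression3}), then substituting $\mu(u,v;1-m)=-{\rm i}q^{-1/8}H_{m-1}(\cos\pi(u-v)\mid q)$ from Theorem \ref{thm:mu and CqH} (valid for $m\geq 1$), yields the claimed identity.

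The main obstacle is the first step: identifying the Taylor coefficients of the infinite product with the explicit ${}_1\phi_1$ expression for $F_{n+1}$. Once this coefficient extraction is settled, both formulas follow by routine formal-series manipulations; no further difficult identity is needed, since equation (\ref{eq:Sr expression3}) already packages the analytic content coming from Andrews' transformation (\ref{eq:Andrews formula}).
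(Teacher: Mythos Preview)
Your proposal is correct and follows essentially the same route as the paper: expand the product $\big(r{\rm e}^{\pi{\rm i}(u-v)}q,r{\rm e}^{-\pi{\rm i}(u-v)}q\big)_\infty$ via the $q$-exponential identity and identify the coefficients with $F_{n+1}$ to get (\ref{eq:Sr expressions}) by Cauchy product with (\ref{eq:Sr expression3}); then divide (\ref{eq:Sr expression3}) by the same product, invoke (\ref{eq:gen func of CqH}), and compare coefficients of $r^m$ to obtain (\ref{eq:Sr expressions 2}). The only difference is cosmetic: you make the final substitution $\mu(u,v;1-m)=-{\rm i}q^{-1/8}H_{m-1}$ from Theorem~\ref{thm:mu and CqH} explicit, whereas the paper leaves it implicit in ``we obtain the conclusion''.
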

\begin{proof}
From the Taylor expansion of the $q$-exponential function \cite[equation~(II.2)]{GR}
\[
(x)_{\infty }=\sum_{n\geq 0}\frac{1}{(q)_{n}}(-1)^nq^\frac{n(n-1)}{2}x^{n},
\]
we have
\begin{align*}
\big({\rm e}^{{\rm i}\theta }rq,{\rm e}^{-{\rm i}\theta }rq\big)_{\infty }
 &=
 \sum_{n\geq 0}(-qr)^{n}\sum_{k=0}^{n}\frac{q^{\binom{k}{2}}q^{\binom{n-k}{2}}}{(q )_{k}(q )_{n-k}}{\rm e}^{{\rm i}(n-2k)\theta }\\
 &=
 \sum_{n\geq 0}{}_1\phi_1\hyper{q^{-n}}{0}{q}{{\rm e}^{2{\rm i}\theta}q}\frac{(-1)^nq^\frac{n(n-1)}{2}}{(q)_n}({\rm e}^{-{\rm i}\theta}rq)^n\\
 &=
 \sum_{n\geq0}F_{n+1}(\cos\pi(u-v)\mid q)r^n.
\end{align*}
By expanding (\ref{eq:Sr expression3}) and comparing the coefficient in (\ref{eq:Sr expression3}), we obtain (\ref{eq:Sr expressions}).
To prove (\ref{eq:Sr expressions 2}), we divide by both sides of (\ref{eq:Sr expression3}) by $\big({\rm e}^{{\rm i}\theta }rq,{\rm e}^{-{\rm i}\theta }rq\big)_{\infty }$;
\begin{align}
\label{eq:Sr expression3 divid}
\frac{1}{(r{\rm e}^{\pi{\rm i}(u-v)}q,r{\rm e}^{-\pi{\rm i}(u-v)}q)_{\infty }}S(r)
 =
 \sum_{m\geq 0}\frac{\mu(u,v;1-m )}{(q)_{m}}q^{m}r^{m}.
\end{align}
By the generating function of continuous $q$-Hermite polynomial (\ref{eq:gen func of CqH}), the left-hand side of~(\ref{eq:Sr expression3 divid}) is equal to
\[
\sum_{m\geq 0}\sum_{k=0}^{m}\mu{(u,v;k+1)}\frac{H_{m-k}(\cos{\pi (u-v)}\mid q)}{(q)_{m-k}}q^{m-k}r^{m}.
\]
Then we obtain the conclusion (\ref{eq:Sr expressions 2}).
\end{proof}

\section[Modular transformations related to mu(u,v;k,tau)]{Modular transformations related to $\boldsymbol{\mu(u,v;k,\tau)}$}\label{section4}
In this section, again let $k$ be a positive integer.
We state that $\mu(u,v;k,\tau)$ is essentially equivalent to the original $\mu$-function with respect to the properties of a real-analytic Jacobi form.
\begin{prop}
We define a modified $\mu(u,v;k,\tau)$ by
\begin{gather*}
\tilde{\nu} (u,v;k,\tau)
 :=
 \frac{\mu(u,v;k+1,\tau)}{F_{k+1}(\cos\pi(u-v)\mid q)}+\frac{1}{2{\rm i}}R_{k+1}(u-v;\tau),
\end{gather*}
where
\[
R_{k+1}(u;\tau):=R(u;\tau)-2q^{-\frac{1}{8}}\sum_{l=1}^k\frac{q^l}{(q)_l}\frac{F_{k-l+1}(\cos\pi u\mid q)}{F_{k+1}(\cos\pi u\mid q)}H_{l-1}(\cos\pi u \mid q).
\]
We have
\begin{gather*}
\tilde{\mu}(u,v;\tau)=\tilde{\nu} (u,v;k,\tau).
\end{gather*}
Therefore, the following transformations hold;
\begin{gather}
\label{eq:tnu +1}
\tilde{\nu} (u,v;k,\tau+1)
 =
 {\rm e}^{-\frac{\pi{\rm i}}{4}}\tilde{\nu}(u,v;k,\tau ), \\
\label{eq:tnu +tau}
\tilde{\nu} \left(\frac{u}{\tau},\frac{v}{\tau};k,-\frac{1}{\tau}\right)
 =
 -{\rm i}\sqrt{-{\rm i}\tau}{\rm e}^{\pi{\rm i}\frac{(u-v)^2}{\tau}}\tilde{\nu} (u,v;k,\tau ).
\end{gather}
\end{prop}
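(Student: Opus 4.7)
The plan is to reduce the proposition to Zwegers' known transformation laws for $\tilde{\mu}$ by proving the identity $\tilde{\mu}(u,v;\tau) = \tilde{\nu}(u,v;k,\tau)$ first; once that is in hand, the two transformation formulas (\ref{eq:tnu +1}) and (\ref{eq:tnu +tau}) follow immediately from the corresponding Zwegers transformation laws stated earlier in the introduction. So the entire content of the proposition is the equality $\tilde{\mu} = \tilde{\nu}$, and the correction $R_{k+1}$ is engineered precisely to make this true.

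The main tool is the expansion \eqref{eq:Sr expressions} from Corollary \ref{cor:Sr expressions}, combined with Theorem \ref{thm:mu and CqH}. First I would isolate the $l=0$ term in \eqref{eq:Sr expressions}, which is just $F_{k+1}(\cos\pi(u-v)\mid q)\mu(u,v;1)=F_{k+1}(\cos\pi(u-v)\mid q)\mu(u,v)$, and for $l\ge 1$ substitute $\mu(u,v;1-l) = -{\rm i}q^{-1/8}H_{l-1}(\cos\pi(u-v)\mid q)$ from Theorem \ref{thm:mu and CqH}. Dividing through by $F_{k+1}(\cos\pi(u-v)\mid q)$ gives
\begin{gather*}
\frac{\mu(u,v;k+1)}{F_{k+1}(\cos\pi(u-v)\mid q)}
 =
 \mu(u,v) - {\rm i}q^{-\frac{1}{8}}\sum_{l=1}^{k}\frac{q^{l}}{(q)_{l}}\frac{F_{k-l+1}(\cos\pi(u-v)\mid q)}{F_{k+1}(\cos\pi(u-v)\mid q)}H_{l-1}(\cos\pi(u-v)\mid q).
\end{gather*}

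Next I would plug this into the definition of $\tilde{\nu}$ and, in parallel, expand the correction term $\tfrac{1}{2{\rm i}}R_{k+1}(u-v;\tau)$ using the definition of $R_{k+1}$. The non-modular Hermite-polynomial pieces appearing in the two expansions are exact negatives of each other (this is why the coefficient $-2q^{-1/8}$ is placed in $R_{k+1}$), so they cancel, and what remains is precisely $\mu(u,v;\tau)+\tfrac{{\rm i}}{2}R(u-v;\tau) = \tilde{\mu}(u,v;\tau)$. At this stage the proposition is proved, and \eqref{eq:tnu +1}, \eqref{eq:tnu +tau} follow by substituting the identity $\tilde{\nu}=\tilde{\mu}$ into Zwegers' transformation laws recalled at the beginning of the introduction.

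The main obstacle I expect is purely bookkeeping: one must verify that the various factors of ${\rm i}$, $q^{\pm 1/8}$, and the placement of $F_{k-l+1}/F_{k+1}$ inside $R_{k+1}$ all combine with the correct signs so that the Hermite-polynomial tail cancels exactly. In particular, care is needed because $F_{k+1}(\cos\pi(u-v)\mid q)$ depends on $u-v$ but not on $\bar\tau$, which means dividing by it does not affect the modular transformation properties of either $\mu$ or $R$; this is the implicit fact that makes the reduction to Zwegers' $\tilde{\mu}$ legitimate in the last step. Beyond this, no new analytic input is required — the structural identity \eqref{eq:Sr expressions} together with $\mu(u,v;-n)=-{\rm i}q^{-1/8}H_n(\cos\pi(u-v)\mid q)$ does all the work.
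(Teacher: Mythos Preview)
Your proposal is correct and follows essentially the same approach as the paper: both isolate the $l=0$ term of \eqref{eq:Sr expressions}, use Theorem~\ref{thm:mu and CqH} to rewrite the $l\ge 1$ terms via continuous $q$-Hermite polynomials, and then observe that the resulting sum is exactly what the definition of $R_{k+1}$ is designed to cancel, leaving $\tilde{\mu}$. The only superfluous remark is your comment about $F_{k+1}$ not depending on $\bar\tau$; once $\tilde{\nu}=\tilde{\mu}$ is established as a literal equality of functions, the transformations \eqref{eq:tnu +1}--\eqref{eq:tnu +tau} follow by direct substitution without any further consideration of how the individual pieces behave.
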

\begin{proof}
From equation (\ref{eq:Sr expressions}) of Corollary \ref{cor:Sr expressions},
\begin{gather*}
 \mu(u,v;\tau)-\frac{\mu(u,v;k+1,\tau)}{F_{k+1}(\cos\pi(u-v)\mid q)} \\
\qquad{} =
 {\rm i}q^{-\frac{1}{8}}\sum_{l=1}^k\frac{q^l}{(q)_l}\frac{F_{k-l+1}(\cos\pi(u-v)\mid q)}{F_{k+1}(\cos\pi(u-v)\mid q)}H_{l-1}(\cos\pi(u-v)\mid q). 
\end{gather*}
Therefore,
\begin{gather*}
\tilde{\mu}(u,v;\tau)
 =
 \mu(u,v;\tau)+\frac{1}{2{\rm i}}R(u-v;\tau) \\
\hphantom{\tilde{\mu}(u,v;\tau)}{}
=\frac{\mu(u,v;k+1,\tau)}{F_{k+1}(\cos\pi(u-v)\mid q)}+\frac{1}{2{\rm i}}R(u-v;\tau) \\
\hphantom{\tilde{\mu}(u,v;\tau)=}{}
 +{\rm i}q^{-\frac{1}{8}}\sum_{l=1}^k\frac{q^l}{(q)_l}\frac{F_{k-l+1}(\cos\pi(u-v)\mid q)}{F_{k+1}(\cos\pi(u-v)\mid q)}H_{l-1}(\cos\pi(u-v)\mid q) \\
\hphantom{\tilde{\mu}(u,v;\tau)}{}
=
 \frac{\mu(u,v;k+1,\tau)}{F_{k+1}(\cos\pi(u-v)\mid q)}+\frac{1}{2{\rm i}}R_{k+1}(u-v;\tau).\tag*{\qed}
\end{gather*}\renewcommand{\qed}{}
\end{proof}

\begin{rmk}T.~Matsusaka also mentions this proposition~\cite{M}.
\end{rmk}

\section{Remarks for further studies}\label{section5}

In this paper, from the point of view of the analysis of one-variable linear $q$-difference equations of the Laplace type, we have studied a generalization of Zwegers' $\mu$-function $\mu(x,y;a)$ which satisfies the $q$-Hermite--Weber equation (\ref{eq:q-Hermite}).

On the other hand, the generalized $\mu$-function $\mu(x,y;a)$ is a two-variable function originally.
More precisely, $\mu(x,y;a)$ is closely related to the $q$-Appell hypergeometric function $\Phi^{(1)}$ (\ref{q-Appell}) and its $q$-difference system which we call $q$-Appell difference system:
\begin{gather}
[(1-T_{x})(1-c/q T_{x}T_{y})-x(1-aT_{x}T_{y})(1-b_{1}T_{x})]\Phi (x,y)=0,\nonumber \\
[(1-T_{y})(1-c/q T_{x}T_{y})-y(1-aT_{x}T_{y})(1-b_{2}T_{y})]\Phi (x,y)=0, \nonumber\\
\label{eq:Asystem3}
[x(1-T_{y})(1-b_{1}T_{x})-y(1-T_{x})(1-b_{2}T_{y})]\Phi (x,y)=0.
\end{gather}
The first and second equations are essentially equivalent to Gasper--Rahman \cite[Exer\-ci\-ses~10.12(i) and~(ii)]{GR} which have some small typos (the terms $(c/q-a)f(qx,qy)$ in (i) and (ii) should be $(c/q-ax)f(qx,qy)$ and $(c/q-ay)f(qx,qy)$).
The third equation (\ref{eq:Asystem3}) is a reducible factor of the following $q$-difference equation:
\begin{gather*}
 (1-T_{x})[(1-T_{y})(1-c/q T_{x}T_{y})-y(1-aT_{x}T_{y})(1-b_{2}T_{y})]\Phi (x,y) \nonumber \\
\qquad \quad {}-
 (1-T_{y})[(1-T_{x})(1-c/q T_{x}T_{y})-x(1-aT_{x}T_{y})(1-b_{1}T_{x})]\Phi (x,y) \nonumber \\
\qquad{} =
 (1-aT_{x}T_{y})[x(1-T_{y})(1-b_{1}T_{x})-y(1-T_{x})(1-b_{2}T_{y})]\Phi (x,y)
 =0
\end{gather*}
and we easily show that $\Phi^{(1)}$ satisfies~(\ref{eq:Asystem3}).

First, $\Phi^{(1)}$ appears in the expression of $\mu(x,y;a)$.
In Theorem~\ref{Theorem 3}, by dividing the bilateral sum of $q$-hypergeometric series ${}_2\psi_2$ and ${}_0\psi_2$ into two parts, positive and negative;
\begin{gather*}
{}_2\psi_2\left(\begin{matrix}x/a,y/a\\0,0\end{matrix};q,a\right)
 =
 \frac{xy}{a}\left(1-\frac{a}{x}\right)\left(1-\frac{a}{y}\right){}_3\phi_2\left(\begin{matrix}xq/a,yq/a,q \\ 0,0 \end{matrix};q,a\right) \\
\hphantom{{}_2\psi_2\left(\begin{matrix}x/a,y/a\\0,0\end{matrix};q,a\right)=}{}
 +{}_2\phi_2\left(\begin{matrix}0,q \\ aq/x,aq/y \end{matrix};q,-\frac{aq^2}{xy}\right), \\
{}_0\psi_2\left(\begin{matrix}-\\x,y\end{matrix};q,\frac{xy}{a}\right)
 =
 \frac{1}{(1-x)(1-y)}{}_1\phi_2\left(\begin{matrix}q \\ xq,yq \end{matrix};q,\frac{xyq^2}{a}\right)
 +{}_3\phi_2\left(\begin{matrix}q/x,q/y,q \\ 0,0 \end{matrix};q,a\right),
\end{gather*}
we rewrite Theorem \ref{Theorem 3} as follows:
\begin{gather*}
{\rm i}q^{\frac{1}{8}}\mu(x,y;a)
 =
 {\rm e}^{\pi{\rm i}\alpha(u-v)}\left\{\frac{xy}{a}\frac{(a,q,a/x,a/y)_\infty}{\theta_q(-y)\theta_q(-x/a)}
 {}_3\phi_2\left(\begin{matrix}xq/a,yq/a,q \\ 0,0 \end{matrix};q,a\right) \right. \nonumber\\
\left.
\hphantom{{\rm i}q^{\frac{1}{8}}\mu(x,y;a) = {\rm e}^{\pi{\rm i}\alpha(u-v)}}{}
 +\frac{(a,q,aq/x,aq/y)_\infty}{\theta_q(-y)\theta_q(-x/a)}{}_2\phi_2\left(\begin{matrix}0,q \\ aq/x,aq/y \end{matrix};q,-\frac{aq^2}{xy}\right)\right\} \\
\hphantom{{\rm i}q^{\frac{1}{8}}\mu(x,y;a)}{}
=
 {\rm e}^{\pi{\rm i}\alpha(u-v)}\left\{\frac{(a,q,xq,yq)_\infty}{\theta_q(-y)\theta_q(-x/a)}{}_1\phi_2\left(\begin{matrix}q \\ xq,yq \end{matrix};q,\frac{xyq^2}{a}\right) \right. \nonumber\\
\left.
\hphantom{{\rm i}q^{\frac{1}{8}}\mu(x,y;a) = {\rm e}^{\pi{\rm i}\alpha(u-v)}}{}
 +\frac{(a,q,x,y)_\infty}{\theta_q(-y)\theta_q(-x/a)}{}_3\phi_2\left(\begin{matrix}q/x,q/y,q \\ 0,0 \end{matrix};q,a\right)\right\}.
\end{gather*}
By Andrews' formula (\ref{eq:Andrews formula}), we obtain
\begin{gather*}
{\rm i}q^{\frac{1}{8}}\mu(x,y;a)
 =
 {\rm e}^{\pi{\rm i}\alpha(u-v)}\left\{-\frac{(aq)_\infty\theta_q(-yq/a)}{(q)_\infty\theta_q(-yq)}\Phi^{(1)}\left(\begin{matrix}a;0,0 \\ aq \end{matrix};q;\frac{xq}{a},\frac{yq}{a}\right) \right. \nonumber\\
\left.
\hphantom{{\rm i}q^{\frac{1}{8}}\mu(x,y;a) =
 {\rm e}^{\pi{\rm i}\alpha(u-v)}}{}
+\frac{(a,q,aq/x,aq/y;q)_\infty}{\theta_q(-y)\theta_q(-x/a)}{}_2\phi_2\left(\begin{matrix}0,q \\ aq/x,aq/y \end{matrix};q,-\frac{aq^2}{xy}\right)\right\} \\
\hphantom{{\rm i}q^{\frac{1}{8}}\mu(x,y;a)}{}
=
 {\rm e}^{\pi{\rm i}\alpha(u-v)}\left\{\frac{(a,q,xq,yq)_\infty}{\theta_q(-y)\theta_q(-x/a)}{}_1\phi_2\left(\begin{matrix}q \\ xq,yq \end{matrix};q,\frac{xyq^2}{a}\right) \right. \nonumber\\
\left.
\hphantom{{\rm i}q^{\frac{1}{8}}\mu(x,y;a) =
 {\rm e}^{\pi{\rm i}\alpha(u-v)}}{}
+\frac{(aq)_\infty\theta_q(-x)}{(q)_\infty\theta_q(-x/a)}\Phi^{(1)}\left(\begin{matrix}a;0,0 \\ aq \end{matrix};q;\frac{q}{x},\frac{q}{y}\right)\right\}.
\end{gather*}
Namely, $\mu(x,y;a)$ is regarded as a bilateral version of $q$-Appell hypergeometric functions
\[
\Phi^{(1)}\left(\begin{matrix}a;0,0 \\ aq \end{matrix};q;\frac{xq}{a},\frac{yq}{a}\right)
\qquad \text{or} \qquad
\Phi^{(1)}\left(\begin{matrix}a;0,0 \\ aq \end{matrix};q;\frac{q}{x},\frac{q}{y}\right).
\]

Further, $\mu(x,y;a)$ essentially satisfies the $q$-Appell difference system in the case of $b_1=b_2=0$, $c=aq$:
\begin{gather}
[(1-x-T_x)(1-aT_xT_y)]\Phi (x,y)=0, \nonumber\\
[(1-y-T_y)(1-aT_xT_y)]\Phi (x,y)=0, \nonumber\\
[x(1-T_y)-y(1-T_x)]\Phi (x,y)=0.\label{eq:q-Appell 2}
\end{gather}
\begin{thm}The function
\[
\nu(x,y;a):={\rm e}^{-\pi{\rm i}\alpha(u-v)}\frac{\theta_q(-ay)}{\theta_q(-y)}\mu(ax,ay;a)
\]
satisfies the multivariate $q$-difference equation \eqref{eq:q-Appell 2}.
More precisely, we have
\begin{gather}
\label{eq:more reduce}
(1-aT_xT_y)\nu(x,y;a)=0, \\
\label{eq:more reduce 2}
[x(1-T_y)-y(1-T_x)]\nu (x,y;a)=0.
\end{gather}
\end{thm}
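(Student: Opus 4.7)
The plan is to convert $\nu(x,y;a)$ into a manifestly symmetric bilateral series by specializing the second ${}_2\psi_2$ representation of $\mu$ in Theorem~\ref{Theorem 3}, after which both $q$-difference relations reduce to elementary identities inside the summand.

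\emph{Step~1 (key step).} Substituting $(x,y)\mapsto(ax,ay)$ in the ${}_2\psi_2$-form of Theorem~\ref{Theorem 3} gives
\begin{gather*}
\mu(ax,ay;a)=-{\rm i}q^{-\frac{1}{8}}(x/y)^{\frac{\alpha}{2}}\frac{(a,q,q/x,q/y)_\infty}{\theta_q(-ay)\theta_q(-x)}\,{}_2\psi_2\left(\begin{matrix}x,y\\0,0\end{matrix};q,a\right).
\end{gather*}
Premultiplying by $e^{-\pi{\rm i}\alpha(u-v)}\theta_q(-ay)/\theta_q(-y)$, noting that $(x/y)^{\alpha/2}=e^{\pi{\rm i}\alpha(u-v)}$ cancels the phase and that $\theta_q(-ay)$ also cancels, and using $(0;q)_n=1$ to expand the ${}_2\psi_2$, I obtain the manifestly $(x\leftrightarrow y)$-symmetric formula
\begin{gather*}
\nu(x,y;a)=C(x,y;a)\sum_{n\in\Zz}(x;q)_n(y;q)_n a^n,\qquad C(x,y;a):=-\frac{{\rm i}q^{-\frac{1}{8}}(a,q,q/x,q/y)_\infty}{\theta_q(-x)\theta_q(-y)}.
\end{gather*}

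\emph{Step~2 (shifted values).} Using the elementary identities $(zq;q)_n=(z;q)_n(1-zq^n)/(1-z)$, $(1/z;q)_\infty=(1-1/z)(q/z;q)_\infty$ and $\theta_q(-qz)=-z^{-1}\theta_q(-z)$, the auxiliary $(1-z)^{\pm 1}$ factors produced by the shift of $x$ or $y$ are absorbed precisely by the corresponding transformation of the prefactor $C$, yielding
\begin{gather*}
\nu(qx,y;a)=C(x,y;a)\sum_{n\in\Zz}(x;q)_n(y;q)_n a^n(1-xq^n),\\
\nu(x,qy;a)=C(x,y;a)\sum_{n\in\Zz}(x;q)_n(y;q)_n a^n(1-yq^n),\\
\nu(qx,qy;a)=C(x,y;a)\sum_{n\in\Zz}(x;q)_n(y;q)_n a^n(1-xq^n)(1-yq^n).
\end{gather*}

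\emph{Step~3 (verification).} For \eqref{eq:more reduce}, subtracting $a\,\nu(qx,qy;a)$ from $\nu(x,y;a)$ and shifting $n\to n-1$ in the second sum reduces the difference to
\begin{gather*}
C(x,y;a)\sum_{n\in\Zz}\bigl[(x;q)_n(y;q)_n-(x;q)_{n-1}(y;q)_{n-1}(1-xq^{n-1})(1-yq^{n-1})\bigr]a^n,
\end{gather*}
which vanishes termwise by $(z;q)_{n-1}(1-zq^{n-1})=(z;q)_n$. For \eqref{eq:more reduce 2}, the identities $1-(1-yq^n)=yq^n$ and $1-(1-xq^n)=xq^n$ yield
\begin{gather*}
x(1-T_y)\nu(x,y;a)=C(x,y;a)\,xy\sum_{n\in\Zz}(x;q)_n(y;q)_n a^n q^n=y(1-T_x)\nu(x,y;a),
\end{gather*}
so the two terms cancel.

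The main obstacle is Step~1: one has to track the branch convention $(x/y)^{\alpha/2}=e^{\pi{\rm i}\alpha(u-v)}$ under $x=e^{2\pi{\rm i}u}$, $y=e^{2\pi{\rm i}v}$, $a=q^\alpha$, and verify that every asymmetric factor in the definition of $\nu$ conspires with the ${}_2\psi_2$-expression of $\mu$ to leave a cleanly symmetric sum; once this symmetric form is in hand, Steps~2--3 are routine algebraic manipulations inside the bilateral series.
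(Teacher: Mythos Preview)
Your proof is correct but proceeds by a genuinely different route from the paper. The paper argues directly from the functional equations of $\mu(u,v;\alpha)$ established earlier: for \eqref{eq:more reduce} it invokes the $\tau$-periodicity \eqref{eq:main results4} (i.e., $\mu(axq,ayq;a)=\mu(ax,ay;a)$) together with $\theta_q(-qz)=-z^{-1}\theta_q(-z)$, and for \eqref{eq:more reduce 2} it feeds the forward shift \eqref{eq:main results2} and its $v$-analogue \eqref{eq:v+tau} into the two terms and watches the $\mu(\cdot;\alpha-1)$ contributions cancel. You instead specialize the ${}_2\psi_2$-expression of Theorem~\ref{Theorem 3} at $(x,y)\mapsto(ax,ay)$, which collapses $\nu$ to the manifestly symmetric form $C(x,y;a)\sum_{n\in\Zz}(x)_n(y)_n a^n$, and then both identities become one-line termwise checks. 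Your approach has the virtue of making the $(x\leftrightarrow y)$-symmetry of $\nu$ transparent and reducing the verification to elementary $q$-shifted factorial identities, whereas the paper's argument emphasizes that \eqref{eq:more reduce}--\eqref{eq:more reduce 2} are repackagings of the basic $\mu$-shifts already proved, tying the $q$-Appell system back to the main theorems.
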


\begin{proof}
To prove the first and second equations of (\ref{eq:q-Appell 2}), it is enough to show (\ref{eq:more reduce}).
By simple calculation, we have{\samepage
\begin{gather*}
aT_xT_y\nu(x,y;a)
=
 a\frac{\theta_q(-ayq)}{\theta_q(-yq)}\mu(axq,ayq;a) =
 a\frac{-a^{-1}y^{-1}}{-y^{-1}}\frac{\theta_q(-ay)}{\theta_q(-y)}\mu(x,y;a)
 =
 \nu(x,y;a).
\end{gather*}
Here, the second equality follows from $\tau$-periodicity (\ref{eq:main results4}).}

Another equation (\ref{eq:more reduce 2}) is proved as follows:
\begin{gather*}
(xT_y-yT_x)\nu(x,y;a)
 =
 x{\rm e}^{-\pi{\rm i}\alpha(u-v-\tau)}\frac{\theta_q(-ayq)}{\theta_q(-yq)}\mu(ax,ayq;a)\\
\hphantom{(xT_y-yT_x)\nu(x,y;a)=}{}
-y{\rm e}^{-\pi{\rm i}\alpha(u-v+\tau)}\frac{\theta_q(-ay)}{\theta_q(-y)}\mu(axq,ay;a)\\
\hphantom{(xT_y-yT_x)\nu(x,y;a)}{}
=
 {\rm e}^{-\pi{\rm i}\alpha(u-v)}\frac{\theta_q(-ay)}{\theta_q(-y)}\big\{ (-y\mu(ax,ay;a)+\sqrt{xy}\mu(ax,ay;a/q) ) \\
\hphantom{(xT_y-yT_x)\nu(x,y;a)=}{}
- (-x\mu(ax,ay;a)+\sqrt{xy}\mu(ax,ay;a/q) )\big\}\\
\hphantom{(xT_y-yT_x)\nu(x,y;a)}{}
=(x-y)\nu(x,y;a).
\end{gather*}
The first and second equalities in the above follow from the forward shift $(\ref{eq:main results2})$ and $(\ref{eq:v+tau})$, respectively.
\end{proof}

In particular, Zwegers' $\mu$-function $\nu(x,y;q)=-{\rm e}^{-\pi{\rm i}(u+v)}\mu(x,y)$ also is a solution of the case of $a=q$ in the two-variate $q$-difference system (\ref{eq:q-Appell 2}):
\begin{gather}
[1-qT_xT_y]\nu (x,y;q)=0, \nonumber\\
[x(1-T_y)-y(1-T_x)]\nu (x,y;q)=0.\label{eq:q-Appell 3}
\end{gather}
Based on this fact, it would be desirable to study other generalizations and their global analysis of the $\mu$-function $\mu(u,v)$ from the view of analysis of the $q$-Appell difference system (\ref{eq:q-Appell 3}).

\subsection*{Acknowledgments}
We are grateful to Professor Yasuhiko Yamada (Kobe University) for his helpful advice on our paper.
We also wish to thank Professor Yosuke Ohyama (Tokushima University) for his valuable suggestions on $q$-special functions, including in the unpublished proof of Lemma~\ref{lem:lemma2}~\cite{O2}.
We are also indebted to Professor Kazuhiro Hikami (Kyushu University) for his information for~\cite{GW} and quantum invariants.
Professor Toshiki Matsusaka (Kyushu University) also provides information on~\cite{GW} and his note~\cite{M}.
Some pieces of information on the $q$-Appell hypergeometric function~$\Phi ^{(1)}$ and its $q$-difference equations are provided by Dr.\ T.~Nobukawa.
Finally, we thank the referees for their helpful comments about mock and indefinite theta functions.
This work was supported by JSPS KAKENHI Grant Number 21K13808.

\pdfbookmark[1]{References}{ref}
\LastPageEnding


\begin{thebibliography}{99}
\footnotesize\itemsep=0pt

\bibitem{AB}
Andrews G.E., Berndt B.C., Ramanujan's lost notebook. {P}art~{V}, \href{https://doi.org/10.1007/978-3-319-77834-1}{Springer},
 Cham, 2018.

\bibitem{AH}
Andrews G.E., Hickerson D., Ramanujan's ``lost'' notebook.~{VII}. {T}he sixth
 order mock theta functions, \href{https://doi.org/10.1016/0001-8708(91)90083-J}{\textit{Adv. Math.}} \textbf{89} (1991), 60--105.

\bibitem{BW}
Beals R., Wong R., Special functions. A graduate text, \textit{Cambridge Stud.
 Adv. Math.}, Vol.~126, \href{https://doi.org/10.1017/CBO9780511762543}{Cambridge University Press}, Cambridge, 2010.

\bibitem{B}
Bradley-Thrush J.G., Properties of the {A}ppell--{L}erch function~({I}),
 \href{https://doi.org/10.1007/s11139-021-00445-4}{\textit{Ramanujan~J.}} \textbf{57} (2022), 291--367.

\bibitem{BFOR}
Bringmann K., Folsom A., Ono K., Rolen L., Harmonic {M}aass forms and mock
 modular forms: theory and applications, \textit{Amer. Math. Soc. Colloq.
 Publ.}, Vol.~64, \href{https://doi.org/10.1090/coll/064}{Amer. Math. Soc.}, Providence, RI, 2017.

\bibitem{C}
Choi Y.S., The basic bilateral hypergeometric series and the mock theta
 functions, \href{https://doi.org/10.1007/s11139-010-9269-7}{\textit{Ramanujan~J.}} \textbf{24} (2011), 345--386.

\bibitem{GW}
Garoufalidis S., Wheeler C., Modular $q$-holonomic modules,
 \href{https://arxiv.org/abs/2203.17029}{arXiv:2203.17029}.

\bibitem{GR}
Gasper G., Rahman M., Basic hypergeometric series, 2nd ed., \textit{Encyclopedia Math.
 Appl.}, Vol.~96, \href{https://doi.org/10.1017/CBO9780511526251}{Cambridge University Press}, Cambridge, 2004.

\bibitem{G}
Gauss C.F., Summatio quarumdam serierum singularium, \textit{Comm. Soc. Reg.
 Sci. Gottingensis Rec.} \textbf{1} (1811), 1--40.

\bibitem{GM}
Gordon B., McIntosh R.J., A survey of classical mock theta functions, in
 Partitions, {$q$}-Series, and Modular Forms, \textit{Dev. Math.}, Vol.~23,
 \href{https://doi.org/10.1007/978-1-4614-0028-8_9}{Springer}, New York, 2012, 95--144.

\bibitem{H}
Hickerson D., A proof of the mock theta conjectures, \href{https://doi.org/10.1007/BF01394279}{\textit{Invent. Math.}}
 \textbf{94} (1988), 639--660.

\bibitem{K}
Kang S.Y., Mock {J}acobi forms in basic hypergeometric series, \href{https://doi.org/10.1112/S0010437X09004060}{\textit{Compos.
 Math.}} \textbf{145} (2009), 553--565, \href{https://arxiv.org/abs/0806.1878}{arXiv:0806.1878}.

\bibitem{KLS}
Koekoek R., Lesky P.A., Swarttouw R.F., Hypergeometric orthogonal polynomials
 and their {$q$}-analogues, \textit{Springer Monogr. Math.}, \href{https://doi.org/10.1007/978-3-642-05014-5}{Springer}, Berlin, 2010.

\bibitem{Ko}
Koelink H.T., Hansen--{L}ommel orthogonality relations for {J}ackson's
 {$q$}-{B}essel functions, \href{https://doi.org/10.1006/jmaa.1993.1181}{\textit{J.~Math. Anal. Appl.}} \textbf{175} (1993),
 425--437.

\bibitem{M}
Matsuzaka T., {P}rivate communication, 2022.

\bibitem{O1}
Ohyama Y., A unified approach to $q$-special functions of the {L}aplace type,
 \href{https://arxiv.org/abs/1103.5232}{arXiv:1103.5232}.

\bibitem{O2}
Ohyama Y., {P}rivate communication, 2022.

\bibitem{RSZ}
Ramis J.P., Sauloy J., Zhang C., Local analytic classification of
 {$q$}-difference equations, \textit{Ast\'erisque} \textbf{355} (2013),
 vi+151~pages, \href{https://arxiv.org/abs/0903.0853}{arXiv:0903.0853}.

\bibitem{S}
Suslov S.K., Some orthogonal very-well-poised {$_8\phi_7$}-functions that
 generalize {A}skey--{W}ilson polynomials, \href{https://doi.org/10.1023/A:1011439924912}{\textit{Ramanujan~J.}} \textbf{5}
 (2001), 183--218, \href{https://arxiv.org/abs/math.CA/9707213}{arXiv:math.CA/9707213}.

\bibitem{W}
Weil A., Elliptic functions according to {E}isenstein and {K}ronecker, \textit{Classics
 Math.}, Springer, Berlin, 1999.

\bibitem{R}
Westerholt-Raum M., {${\rm H}$}-harmonic {M}aa{\ss}--{J}acobi forms of
 degree~1, \href{https://doi.org/10.1186/s40687-015-0032-y}{\textit{Res. Math. Sci.}} \textbf{2} (2015), art.~12, 34~pages.

\bibitem{Zh}
Zhang C., Une sommation discr\`ete pour des \'equations aux {$q$}-diff\'erences
 lin\'eaires et \`a coefficients analytiques: th\'eorie g\'en\'erale et
 exemples, in Differential Equations and the {S}tokes Phenomenon, \href{https://doi.org/10.1142/9789812776549_0012}{World Sci.
 Publ.}, River Edge, NJ, 2002, 309--329.

\bibitem{Zw1}
Zwegers S.P., Mock theta functions, Ph.D.~Thesis, {U}niversiteit Utrecht, 2002,
 available at \url{https://dspace.library.uu.nl/handle/1874/878}.

\end{thebibliography}
\end{document}